\theoremstyle{plain}
\newtheorem{thm}{Theorem}[section]
\newtheorem{lem}[thm]{Lemma}
\newtheorem{prop}[thm]{Proposition}
\newtheorem{cor}[thm]{Corollary}
\theoremstyle{definition}
\newtheorem{defn}[thm]{Definition}
\theoremstyle{remark}
\newtheorem{rem}[thm]{Remark}
\newcommand{\eps}{\varepsilon}
\newcommand{\Implies}[2]{$\text{\ref{#1}}\implies\text{\ref{#2}}$}
\newcommand{\IFF}[2]{$\text{\ref{#1}}\Longleftrightarrow\text{\ref{#2}}$}
\newcommand{\Fbar}{\bar{E}}
\newcommand{\V}{\hat{\omega}_T}
\newcommand{\M}{\mathcal{M}}
\newcommand{\MT}{\mathcal{M}_T}
\newcommand{\MTe}{\mathcal{M}_T^e}
\newcommand{\m}{m_T}
\newcommand{\dd}{\mathop{}\!\mathrm{d}}
\DeclareMathOperator{\diam}{diam}
\DeclareMathOperator{\orb}{Orb}
\begin{document}
\title[On the properties of the mean orbital pseudo-metric]{On the properties of the mean orbital pseudo-metric}
\author{Fangzhou Cai, Dominik Kwietniak, Jian Li, Habibeh Pourmand}

\address[F. Cai]{Department of Mathematics, Shantou University,
	Shantou, Guangdong 515063, P.R. China}
\email{cfz@mail.ustc.edu.cn}

\address[D. Kwietniak]{Faculty of Mathematics and Computer Science
Jagiellonian University in Krak\'ow
ul. {\L}ojasiewicza 6, 30-348 Kraków, Poland}
\email{dominik.kwietniak@uj.edu.pl}

\address[J. Li]{Department of Mathematics, Shantou University,
	Shantou, Guangdong 515063, P.R. China}
\email{lijian09@mail.ustc.edu.cn}

\address[H. Pourmand]{Faculty of Mathematics and Computer Science
Jagiellonian University in Kraków
ul. {\L}ojasiewicza 6, 30-348 Krak\'ow, Poland}
\email{habibeh.pourmand@im.uj.edu.pl}

\date{\today}

\subjclass[2010]{37B05, 37A25}

\keywords{The mean orbital  pseudo-metric, invariant measures, unique ergodicity, $\Fbar$-continuity, mean equicontinuity}

\begin{abstract}
 Given a topological dynamical system $(X,T)$, we study properties of the mean orbital  pseudo-metric $\Fbar$ defined by
\[
\Fbar(x,y)=
\limsup_{n\to\infty }
\min_{\sigma\in S_n}\frac{1}{n}\sum_{k=0}^{n-1}d(T^k(x),T^{\sigma(k)}(y)),
\]
where $x,y\in X$ and $S_n$ is the permutation group of $\{0,1,\ldots,n-1\}$.
Let $\V(x)$ denote the  set of measures quasi-generated by a  point $x\in X$.
We show that the map $x\mapsto\V(x)$ is uniformly continuous if $X$ is endowed with the pseudo-metric $\Fbar$ and the space of compact subsets of the set of invariant measures is  considered with the Hausdorff distance. We also obtain a new characterisation of  $\Fbar$-continuity, which connects it to  other properties studied in the literature, like continuous pointwise ergodicity introduced by Downarowicz and Weiss. Finally, we apply our results to reprove some known results on $\Fbar$-continuous and mean equicontinuous systems.
\end{abstract}

\maketitle

\section{Introduction}

A topological dynamical system
is a pair $(X,T)$, where $X$ is a compact metric space  and $T\colon X\to X$ is a continuous map. For the rest of the paper we endow $X$ with a fixed compatible metric $d$. None of the following results depends on the choice of $d$.

We study properties of the \emph{mean orbital pseudo-metric}  $\Fbar$ on $X$ given for $x,y\in X$ by
\begin{equation}\label{eq:mean-metric}
\Fbar(x,y)=
\limsup_{n\to\infty }
\min_{\sigma\in S_n}\frac{1}{n}\sum_{k=0}^{n-1}d(T^k(x),T^{\sigma(k)}(y)),
\end{equation}
where $S_n$ denotes the permutation group on $\{0,1,\ldots,n-1\}$. It is easy to see that $\Fbar$ is a pseudo-metric and
for every $x,y\in X$ and $n,m\ge 0$ we have
$\Fbar(T^nx,T^my)=\Fbar(x,y)$.
Hence, $\Fbar$ depends on the orbits treated as \emph{sets} rather than on individual points or orbits indexed (ordered) by time.

The pseudo-metric $\Fbar$ was recently introduced\footnote{Note that the authors of \cite{ZZ20} did not give $\Fbar$ any name, and denoted it by $\bar{F}$. We decided to change the notation to avoid confusion with another pseudo-metric coined Feldman-Katok pseudo-metric in \cite{KL17} and denoted by $\bar{F}_K$. The change is justified by the fact that $\bar{F}_K$ from \cite{KL17} corresponds to the pseudo-metric $\bar{f}$ introduced for symbolic systems by Feldman, and independently, by Katok in the '70s, while $\bar{F}$ defined by Zheng and Zheng is not equivalent to $\bar{f}$ on symbolic spaces. The latter fact is a consequence of a characterisation of systems with $\bar{F}_K$-diameter $0$ provided in \cite{GRK20} and the fact that systems with $\Fbar$-diameter $0$ are identified as uniquely  ergodic systems in \cite[Theorem 5.1]{ZZ20} (cf. Corollary \ref{cor:uniquely-erogdic}).} by Zheng and Zheng \cite{ZZ20}. It generalises the Besicovitch pseudo-metric $D_B$ given
for every $x,y\in X$ by
\[
D_B(x,y)=
\limsup_{n\to\infty }
\frac{1}{n}\sum_{k=0}^{n-1}d(T^k(x),T^k(y)).
\]
Note that the definition of the mean orbital pseudo-metric $\Fbar$ follows the same scheme as the definition of $D_B$, but defining $\Fbar$ we ignore  the order of points on the orbit.

We continue the study of $\Fbar$ initiated in \cite{ZZ20} and provide new results on $\Fbar$-continuous systems. Recall that $\Fbar$-continuity is another notion introduced in \cite{ZZ20} (under the name \emph{weak mean equicontinuity}). The definition of $\Fbar$-continuity mimics the definition of mean equicontinuous systems. A topological dynamical system $(X,T)$ is \emph{mean equicontinuous} if for every $\eps>0$
there exists a $\delta>0$ such that for every $x,y\in X$
with $d(x,y)<\delta$ one has $D_B(x,y)<\eps$. These systems were originally introduced by Fomin \cite{F51} who called them \emph{stable in the mean in the sense of Lyapunov} or simply \emph{mean-L-stable}. Actually,  Fomin's definition was slightly different, but equivalent to the one using $D_B$ given here. The definition of mean equicontinutiy was originally proposed in \cite{LTY15} and the equivalence was also noted there.  Oxtoby \cite{Ox52}, Auslander \cite{Aus59} and Scarpellini \cite{S82} also
studied  mean equicontinuous systems.  Recently, there was a revived interest in the mean equicontinuous systems: Li et al. \cite{LTY15} (answering a question from  \cite{S82}) and, independently, Garc{\'{\i}}a-Ramos \cite{GR17} proved that any ergodic invariant measure on a mean equicontinuous system has discrete spectrum. We refer to
\cite{LYY19} for further study on mean equicontinuity and related subjects.

Replacing $D_B$ with $\Fbar$ in the definition of a mean equicontinuous system one arrives at the notion of an \emph{$\Fbar$-continuous} topological dynamical system $(X,T)$. Such a system satisfies that for every $\eps>0$ there is a $\delta>0$ such that for every $x,y\in X$ with $d(x,y)<\delta$ we have $\Fbar(x,y)<\eps$.

By extending some results from \cite{ZZ20} (most notably our Corollary \ref{fn} strengthens \cite[Proposition 5.3]{ZZ20}) we are able to provide new proofs for Theorem 1.3 and Proposition 5.1 of \cite{ZZ20}.
 The latter says that $(X,T)$ is uniquely ergodic if and only if the $\Fbar$-diameter of $X$ is $0$, while the former says that $(X,T)$ is $\Fbar$-continuous if and only if ergodic averages of any continuous function converge pointwise to a continuous function on $X$. The latter notion attracted independent interest and was studied recently by Downarowicz and Weiss \cite{DW21} under the name \emph{continuous pointwise ergodicity}.

Our main result is Theorem \ref{thm-Fbar-continuous} providing an extended characterisation of $\Fbar$-continuous system. We show that $\Fbar$-continuity of $(X,T)$ is equivalent (in particular, see Theorem \ref{thm-Fbar-continuous} below for the full list) to each of the following conditions:
\begin{enumerate}
    \item the conjunction of two conditions: every point $x\in X$ is generic for an ergodic invariant measure $\mu(x)$ and the map $x\mapsto \mu(x)$ is continuous as a map from the metric space $(X,d)$ to the space of ergodic invariant measures $\MTe(X)$ endowed with the weak$^*$ topology (this condition is coined \emph{continuous pointwise ergodicity} of $(X,T)$ in \cite{DW21});
    \item for every continuous function $f\colon X\to\mathbb{R}$ the sequence of ergodic averages
    \[
    \frac{1}{n}\sum_{j=0}^{n-1}f\circ T^j
    \]
    converges uniformly on $X$ (a topological dynamical system satisfying this condition is called \emph{uniform} in \cite{DW21});
    \item the empirical measure maps $\m(\cdot,n)\colon X\to\M(X)$, $n=1,2,\ldots$,  are uniformly equicontinuous on $X$. Here $\M(X)$ denotes the space of all Borel probability measures with the weak$^*$ topology. Furthermore, for $x\in X$ and $n\in\mathbb{N}$ we have
    \[
   \m(x,n)=\frac{1}{n}\sum_{j=0}^{n-1}\hat{\delta}(T^j(x)), 
    \]
    where $\hat{\delta}(T^j(x))$ stands for the Dirac measure concentrated at $T^j(x)$;
\item the map $\V\colon X\ni x\to\V(x)\in2^{\mathcal{M}(X)}$ is continuous, where $\V(x)$ is the set of the limit points of the sequence $(\m(x,n))_{n=1}^\infty$.
\end{enumerate}
Besides being of independent interest, Theorem \ref{thm-Fbar-continuous} allows us to provide new, shorter proofs of two more results known from the literature: the characterisation of mean equicontinuity via $\Fbar$-continuity of the Cartesian square-product system, which was given in  \cite{FGL18} and the equivalence between mean equicontinuity and Weyl mean equicontinuity proved in \cite{DG16} for minimal systems and in \cite{QZ20} for all topological dynamical systems. The latter  result solves \cite[Question 7.1]{LTY15} and its extension to actions of locally compact $\sigma$-compact amenable groups can be found in \cite{FGL18} (cf. Remark \ref{rem:FGL}). Weyl mean equicontinuity arises by replacing the Besicovitch pseudo-metric $D_B$ in the definition of mean equicontinuous systems with the Weyl pseudo-metric $D_W$. The Weyl pseudo-metric on $(X,T)$ is given for $x,y\in X$ by
\[
D_W(x,y)=\limsup_{n-m\to\infty }
\frac{1}{n}\sum_{k=m}^{n}d(T^k(x),T^k(y)).
\]
The Weyl (also called Banach) mean equicontinuity was introduced in \cite{LTY15}, while the Weyl pseudo-metric in topological dynamics was for the first time considered in \cite{DI88}, and generalised in \cite{LS18}. Our Theorem \ref{thm-Fbar-continuous} applies easily to all isometric $\mathbb{Z}$-actions (see Remark \ref{rem:rotations}).

Although our proofs are presented in the setting of classical topological dynamical systems, they are easily generalised to actions of arbitrary countable Abelian 
groups (see Remark \ref{rem:amenable}).




\section{Preliminaries}

Let $X$  be a compact metric space with a metric $d$.
Given a point $x\in X$ and a  non-empty subset $A$ of $X$ we define
\[
d(x,A)=\inf\{ d(x,y)\colon y\in A\}.
\]
Let $\M(X)$ be the set of all Borel probability measures on $X$ endowed with the weak$^*$ topology.
We equip $\M(X)$ with the \emph{Prokhorov metric} denoted by $\rho$, that is
for any $\mu,\nu\in \M(X)$, we have
\[\rho(\mu,\nu)=\inf
\{\eps > 0 \colon  \mu(B) \leq  \nu(B^\eps )+\eps\text{  for every Borel set } B\subset X\}.\]
Here (and elsewhere) $B^\eps$ denotes the $\eps$-hull of $B$, that is,
\[
B^\eps =\{x\in X \colon  d(x,B)<\eps\}.
\]

Let $2^X$ be the \emph{hyperspace} of $X$, that is
$2^X=\{A\subset X\colon A$ is a non-empty closed subset of $X\}$.
The \emph{Hausdorff metric} on $2^X$ is defined as follows:
for every $A,B\in 2^X$ we set
\[
d_H(A,B)=\max\bigl\{ \inf\{\eps>0\colon B\subset A^\eps\},
\inf\{\eps>0\colon A\subset B^\eps\}\bigr\}.
\]

Let $T\colon X\to X$ be a continuous, not necessarily invertible map.
The pair $(X,T)$ forms a topological dynamical system. We write $\orb(x,T)$ for the \emph{orbit} of $x\in X$ with respect to $T$, that is, $\orb(x,T)=\{T^n(x):n\ge 0\}$.
 The set of all probability Borel $T$-invariant measures is denoted by $\MT(X)$.
An invariant measure $\mu\in \MT(X)$ is called \emph{ergodic} if for every Borel subset $B$ in $X$, $T^{-1}B=B$ implies that $\mu(B)=0$ or $1$. The set of all ergodic invariant measures is denoted by $\MTe(X)$.
It is well known that $\MT(X)$ is a non-empty, closed, convex subset of $\M(X)$ and $\MTe(X)$ is the set of extreme points of $\MT(X)$.
We say that $(X,T)$ is \emph{uniquely ergodic} if $\MT(X)$ consists of only one measure.
For $x\in X$, let $\hat{\delta}(x)\in \M(X)$ be the Dirac measure supported on $\{x\}$.
Given $x\in X$ and $n\in\mathbb{N}$, we define an \emph{empirical measure}
\[
\m(x,n)=\frac{1}{n}\sum_{i=0}^{n-1}\hat{\delta}(T^ix).
\]

A measure $\mu\in \M(X)$ is a \emph{distribution measure} for
$x\in X$ if $\mu$
is a limit of some subsequence of $\{\m(x,n)\}_{n=1}^\infty$. In this situation, we also say that $x$ \emph{quasi-generates} $\mu$ (synonymously we say that $x$ is a 
\emph{quasi-generic point} for $\mu$). If a measure is a distribution measure, then  it is necessarily invariant.
The set of all distribution measures for $x\in X$ (all measures quasi-generated by $x$) is denoted by $\V(x)$.
We say that $x\in X$ is a \emph{generic} point for $\mu\in M(X,T)$ if $\mu$ is the unique member of $\V(x)$. Every ergodic invariant measure has a generic point, but generic points need not exist for non-ergodic invariant measures. By compactness, for every $x\in X$ the set $\V(x)$ is a non-empty closed connected subset of $\MT(X)$. For a generic point $x\in X$ the unique measure in $\V(x)$ is denoted by $\mu(x)$.
Let $2^{\M(X)}$ be the hyperspace (the space of all non-empty closed subsets of $\M(X)$) and let $\rho_{H}$ be the Hausdorff metric on $2^{\M(X)}$ induced by the Prokhorov metric $\rho$. The assignment $x\mapsto \V(x)$ determines a map from $X$ to $2^{\MT(X)}\subset 2^{\M(X)}$.

\section{The mean orbital pseudo-metric and invariant measures}
Let $(X,T)$ be a topological dynamical system.
Fix $x,y\in X$.
For every $n\in\mathbb{N}$ and $\sigma \in S_n$ we define
\begin{align*}
\Delta_{\sigma}(x, y, \delta)=
\bigl|\{ 0\leq j\leq n-1\colon d(T^j(x), T^{\sigma(j)}(y))>\delta\} \bigr|,
\end{align*}
where $|\,\cdot\,|$ stands for the cardinality of a set,
and
\begin{align*}
   \widetilde{E}(x,y)= \inf\Bigl\{\eps>0 \colon \limsup_{n\to\infty}\frac1n \min_{\sigma\in S_n} \Delta_{\sigma}(x,y, \eps)< \eps\Bigr\}.
\end{align*}
The next fact follows from standard techniques (for example, see \cite{Aus59}, \cite{KLO17} or \cite[Lemma 3.1]{LTY15}).
Here we provide a proof for completeness.

\begin{lem}\label{lemma:1}
For a topological dynamical system $(X,T)$,
the pseudo-metrics $\Fbar$ and $\widetilde{E}$
are uniformly equivalent on $X$.
\end{lem}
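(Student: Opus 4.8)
The plan is to prove uniform equivalence by exhibiting two explicit one-sided estimates, each controlling one pseudo-metric by the other. Throughout, write $D=\diam(X)$, which is finite by compactness of $X$, and recall $d(T^j(x),T^{\sigma(j)}(y))\le D$ for every index $j$ and every $\sigma\in S_n$. The two estimates I aim for are $\widetilde{E}(x,y)\le\sqrt{\Fbar(x,y)}$ and $\Fbar(x,y)\le(D+1)\,\widetilde{E}(x,y)$; together they immediately give uniform equivalence.

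First I would bound $\widetilde{E}$ in terms of $\Fbar$ by a Markov-type inequality. Fix $\eps>0$ and assume $\Fbar(x,y)<\eps^2$. For every $n$ and every $\sigma\in S_n$, each coordinate counted in $\Delta_\sigma(x,y,\eps)$ contributes more than $\eps$ to the sum $\sum_j d(T^j(x),T^{\sigma(j)}(y))$, whence
\[
\eps\,\Delta_\sigma(x,y,\eps)\le\sum_{j=0}^{n-1}d(T^j(x),T^{\sigma(j)}(y)).
\]
Choosing $\sigma$ to minimise the right-hand side and dividing by $n\eps$ gives $\tfrac1n\min_\sigma\Delta_\sigma(x,y,\eps)\le\tfrac1\eps\min_\sigma\tfrac1n\sum_j d(T^j(x),T^{\sigma(j)}(y))$, so passing to the $\limsup$ yields $\limsup_n\tfrac1n\min_\sigma\Delta_\sigma(x,y,\eps)\le\tfrac1\eps\Fbar(x,y)<\eps$. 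Thus $\eps$ lies in the set defining $\widetilde{E}(x,y)$, giving $\widetilde{E}(x,y)\le\eps$; letting $\eps\downarrow\sqrt{\Fbar(x,y)}$ proves $\widetilde{E}(x,y)\le\sqrt{\Fbar(x,y)}$, so $\Fbar(x,y)\to0$ forces $\widetilde{E}(x,y)\to0$.

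For the reverse estimate I would split the averaged sum into good and bad coordinates. Suppose $\widetilde{E}(x,y)<\eps$; then there is $\eps'<\eps$ with $\limsup_n\tfrac1n\min_\sigma\Delta_\sigma(x,y,\eps')<\eps'$, so for all large $n$ there is $\sigma_n\in S_n$ with $\Delta_{\sigma_n}(x,y,\eps')<\eps'n$. Splitting according to whether $d(T^j(x),T^{\sigma_n(j)}(y))>\eps'$, the fewer than $\eps'n$ bad coordinates contribute at most $\eps'n\cdot D$ and the remaining good coordinates at most $\eps'n$, so
\[
\min_\sigma\frac1n\sum_{j=0}^{n-1}d(T^j(x),T^{\sigma(j)}(y))\le\frac1n\sum_{j=0}^{n-1}d(T^j(x),T^{\sigma_n(j)}(y))\le\eps'(D+1).
\]
Taking $\limsup$ gives $\Fbar(x,y)\le\eps'(D+1)<\eps(D+1)$, hence $\Fbar(x,y)\le(D+1)\,\widetilde{E}(x,y)$, and $\widetilde{E}(x,y)\to0$ forces $\Fbar(x,y)\to0$.

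Combining the two bounds, given $\eps>0$ the choice $\delta=\min\{\eps^2,\eps/(D+1)\}$ witnesses both $\Fbar<\delta\Rightarrow\widetilde{E}<\eps$ and $\widetilde{E}<\delta\Rightarrow\Fbar<\eps$, which is precisely uniform equivalence. The only genuine subtlety is bookkeeping: the permutation minimising the count $\Delta_\sigma$ need not minimise the averaged sum and vice versa, but each inequality uses the minimiser appropriate to its own side; and one must read the infimum defining $\widetilde{E}$ correctly to extract a usable $\eps'<\eps$. Everything else is routine manipulation of the $\limsup$.
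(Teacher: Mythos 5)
Your proof is correct and follows essentially the same route as the paper's: the Markov-type bound $\eps\,\Delta_\sigma(x,y,\eps)\le\sum_j d(T^j(x),T^{\sigma(j)}(y))$ and the good/bad coordinate splitting are exactly the two halves of the paper's key two-sided inequality, which the paper states jointly (after normalising $\diam(X)=1$) and you state as the separate estimates $\widetilde{E}\le\sqrt{\Fbar}$ and $\Fbar\le(D+1)\widetilde{E}$. The only stylistic differences are that you keep the diameter general and handle the infimum defining $\widetilde{E}$ slightly more carefully, neither of which changes the substance.
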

\begin{proof}
Without loss of generality
we assume that the diameter of $X$ is $1$.
Fix $x,y \in X$.
For every $\delta>0$, $n\in\mathbb{N}$
and $\sigma \in S_n$, we have
\begin{equation}\label{ineq:22}
    \delta \cdot \Delta_{\sigma}(x, y, \delta) \leq \sum_{j=0}^{n-1} d(T^j(x), T^{\sigma{(j)}}(y)) \leq \Delta_{\sigma}(x, y, \delta)+\delta\cdot (n-\Delta_{\sigma}(x, y, \delta)).
\end{equation}
Put $\Delta_n(x, y, \delta)=\min_{\sigma\in S_n} \Delta_{\sigma}(x, y, \delta)$ and
\[
\bar{d}(\Delta( x, y,\delta))=
\limsup_{n\to\infty}\frac1n \Delta_n(x, y, \delta).
\]
It follows from \eqref{ineq:22} that for every $n\in\mathbb{N}$ we have
\begin{equation}\label{ineq:222}
    \delta \cdot \Delta_{n}(x, y, \delta) \leq \min_{\sigma\in S_n} \sum_{j=0}^{n-1} d(T^j(x), T^{\sigma{(j)}}(y)) \leq \Delta_{n}(x, y, \delta)+\delta\cdot (n-\Delta_{n}(x, y, \delta)).
\end{equation}
Dividing \eqref{ineq:222} by $n$ and passing with $n$ to infinity we obtain
\begin{equation}\label{ineq:11}
      \delta \cdot \Bar{d}(\Delta(x, y, \delta)) \leq \Fbar(x, y) \leq (1-\delta)\Bar{d}(\Delta(x, y, \delta)) + \delta .
\end{equation}
Fix $\eps>0$ and
assume that $\delta>0$ satisfies $\delta <\eps^{2}$. If $\Fbar(x,y)<\delta$, then using \eqref{ineq:11} we see that
\begin{equation*}
    \eps \cdot \Bar{d}(\Delta(x, y, \eps)) \leq \Fbar(x, y) < \eps^{2}.
\end{equation*}
Then $\Bar{d}(\Delta(x, y, \eps))<\eps$, and by the definition one has
$\widetilde{E}(x, y)< \eps$.
This implies that the identity map $\text{id} \colon (X, \Fbar)\to (X, \widetilde{E})$ is uniformly continuous.

It remains to show that $\text{id} \colon (X, \widetilde{E})\to (X, \Fbar)$
is uniformly continuous.
Fix $\eps >0$ and take $\delta>0$ such that $\delta <\eps/2$. Assume $\widetilde{E}(x,y)< \delta$,
that is $\Bar{d}(\Delta(x, y, \delta))<\delta$.
We want to show that $\Fbar(x, y)<\eps$.
By $\eqref{ineq:11}$ and $\delta<\eps/2$ we see that $\Fbar(x, y)<\eps$.
\end{proof}

\begin{rem}
By Lemma \ref{lemma:1} it is easy to see that the pseudo-metric $\Fbar$ does not depend on the metric $d$ on $X$,
that is if $d'$ is another compatible metric on $X$, then
the pseudo-metric $\Fbar$ and $\Fbar'$
induced by $d$ and $d'$ are uniformly equivalent. This  result can be also deduced from Lemma \ref{lem:En=Wasser}.
\end{rem}

For $n\in\mathbb{N}$ we define
	\[
	\bar E_n(x,y)=\min_{\sigma\in S_n}
	\frac{1}{n}\sum_{k=0}^{n-1}d(T^kx,T^{\sigma(k)}y).
	\]
	It is easy to see that for each $n\in\mathbb{N}$ the function	$\bar E_n$ is a continuous  metric on $X$.
With this notation, we can rewrite \eqref{eq:mean-metric} as
\begin{equation}\label{eq:mean-metric-En}
   \Fbar(x,y)= \limsup_{n\to\infty}\bar E_n(x,y).
\end{equation}
Now, a crucial observation is that  $\bar E_n(x,y)$ equals the Wasserstein\footnote{The Wasserstein distance is also known as Kantorovich-Rubinstein distance, which probably is a more historically accurate name, see \cite{Vershik}. Note also that Leonid Nisonovich Vaserstein 
after whom the Wasserstein distance is named prefers different spelling of his surname.
We have resisted the temptation to rename things to set the record straight and for the sake of consistency with the existing literature we have kept the traditional name of $W_1$ and its spelling.} distance $W_1$ between the probability measures $\m(x,n)$ and $\m(y,n)$ for $n\in\mathbb{N}$. Recall that a \emph{coupling} of $\mu,\nu\in\M(X)$ is a measure
$\pi\in\M(X\times X)$ such that
$\pi(A\times X)=\mu(A)$ and $\pi(X\times B)=\nu(B)$ for all Borel sets $A,B\subseteq X$. Write $\Pi(\mu,\nu)$ for the convex set of all couplings of $\mu$ and $\nu$. The Wasserstein distance $W_1$ between $\mu,\nu$ 
is the cost of optimal transportation (cf. Chapter 21 in \cite{Garling}) between $\mu$ and $\nu$ for the cost function provided by the metric $d$, that is,
\[
W_1(\mu,\nu)=\inf_{\pi\in\Pi(\mu,\nu)}\int_{X\times X} d(x,y)\,\textrm{d}\pi(x,y).
\]
For a compact metric space $X$ the Wasserstein distance $W_1$ is compatible with the weak$^*$ topology on $\M(X)$ (to see this one should combine Thm. 18.6.1, Thm. 18.5.5, Cor. 18.5.6, and Cor. 21.2.4 in \cite{Garling}). Now, the formula
\begin{equation}\label{eq:En=Wasser}
\bar E_n(x,y)= W_1(\m(x,n),\m(y,n))
\end{equation}
can be seen as an easy consequence of the fact that if $\mu$ and $\nu$ are two discrete probability measures and each of them is uniformly distributed over a set of $n$ points in $X$, then the couplings of $\mu$ and $\nu$ are in a one-to-one correspondence with  bistochastic $n\times n$ matrices. The latter fact is certainly known by aficionados (cf. the example given in \cite[pp. 303--304]{Garling}) and then \eqref{eq:En=Wasser} follows from the Birkhoff-von~Neumann theorem (see \cite[Thm. 12.2.11]{Garling}) stating that the extreme points of the convex set of bistochastic $n\times n$ matrices are permutation $n\times n$ matrices. We decided to provide a proof of \eqref{eq:En=Wasser} for completeness.

\begin{lem}\label{lem:En=Wasser}
If $(X,T)$ is a topological dynamical system, then for every $n\in\mathbb{N}$ and $x,y\in X$ the equation \eqref{eq:En=Wasser} holds.
\end{lem}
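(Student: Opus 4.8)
The plan is to fix $n$ and abbreviate $a_k=T^k(x)$, $b_j=T^j(y)$ for $0\le k,j\le n-1$, so that $\m(x,n)=\frac1n\sum_{k}\hat{\delta}(a_k)$ and $\m(y,n)=\frac1n\sum_{j}\hat{\delta}(b_j)$ are finitely supported. Since the two marginals of any $\pi\in\Pi(\m(x,n),\m(y,n))$ live on the finite sets $\supp\m(x,n)$ and $\supp\m(y,n)$, every such coupling is concentrated on the finite product $\supp\m(x,n)\times\supp\m(y,n)$; in particular $\Pi(\m(x,n),\m(y,n))$ is a compact polytope and the infimum defining $W_1$ is attained. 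For the inequality $W_1(\m(x,n),\m(y,n))\le\bar E_n(x,y)$ I would simply exhibit an explicit coupling: for each $\sigma\in S_n$ the measure $\pi_\sigma=\frac1n\sum_{k=0}^{n-1}\hat{\delta}(a_k,b_{\sigma(k)})$ has first marginal $\m(x,n)$ and, because $\sigma$ is a bijection of $\{0,\ldots,n-1\}$, second marginal $\m(y,n)$; hence $\pi_\sigma\in\Pi(\m(x,n),\m(y,n))$ and $\int_{X\times X}d\,\mathrm{d}\pi_\sigma=\frac1n\sum_k d(a_k,b_{\sigma(k)})$. Minimising over $\sigma$ yields the desired bound.

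For the reverse inequality I would set up a cost-preserving correspondence between couplings and doubly stochastic $n\times n$ matrices. Given a coupling $\pi$, I spread the mass that $\pi$ assigns to a pair of \emph{values} $(a_k,b_j)$ evenly among all index pairs carrying those same values, obtaining entries $P_{kj}\ge 0$; conversely, averaging the entries of a doubly stochastic matrix over the blocks of indices on which $a_k$ and $b_j$ are constant produces a coupling. A direct computation of row and column sums shows that the first operation outputs a genuine doubly stochastic matrix $P=(P_{kj})$ with $\int d\,\mathrm{d}\pi=\frac1n\sum_{k,j}P_{kj}\,d(a_k,b_j)$, and that both operations preserve the transport cost, so that the set of attainable costs coincides with $\bigl\{\frac1n\sum_{k,j}P_{kj}\,d(a_k,b_j)\colon P\text{ doubly stochastic}\bigr\}$. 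Consequently
\[
W_1(\m(x,n),\m(y,n))=\min\Bigl\{\tfrac1n{\sum}_{k,j}P_{kj}\,d(a_k,b_j)\colon P\text{ doubly stochastic}\Bigr\}.
\]
The Birkhoff--von Neumann theorem identifies the extreme points of the (compact, convex) Birkhoff polytope as the permutation matrices $P_\sigma$, with $(P_\sigma)_{kj}=1$ when $j=\sigma(k)$ and $0$ otherwise; since a linear functional on a polytope attains its minimum at a vertex, the minimum above is realised by some $P_\sigma$, for which $\frac1n\sum_{k,j}(P_\sigma)_{kj}\,d(a_k,b_j)=\frac1n\sum_k d(a_k,b_{\sigma(k)})$. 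Minimising over $\sigma\in S_n$ recovers exactly $\bar E_n(x,y)$, which completes the proof.

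The step I expect to require the most care is the bookkeeping when the orbit points are not pairwise distinct, that is, when $T^k(x)=T^{k'}(x)$ or $T^j(y)=T^{j'}(y)$ for some $k\ne k'$ or $j\ne j'$. When all points are distinct the identification of couplings with $\tfrac1n$-scaled doubly stochastic matrices is immediate, but with repetitions a single value carries mass equal to a multiple of $1/n$, so the mass that $\pi$ assigns to a pair of values must be distributed among several index pairs (and conversely averaged back). Writing the distinct values together with their multiplicities and verifying that the weighted assignment above respects both the marginal constraints and the transport cost is routine but must be carried out carefully; alternatively one could sidestep it entirely by a perturbation argument, replacing $d$ by metrics that separate the finitely many orbit points and passing to the limit using the continuity of $W_1$ for the weak$^*$ topology.
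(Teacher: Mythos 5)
Your proof is correct and follows essentially the same route as the paper's: the inequality $W_1(\m(x,n),\m(y,n))\le\bar E_n(x,y)$ via the explicit permutation couplings $\pi_\sigma$, and the reverse via the identification of couplings with doubly stochastic matrices plus the Birkhoff--von Neumann theorem (the paper writes a coupling as a convex combination of permutation couplings, which is the same use of extremality as your vertex-minimisation argument). Your bookkeeping for repeated orbit points is in fact a welcome refinement: the paper's correspondence $\Xi_{i+1,j+1}=n\pi(\{(T^i(x),T^j(y))\})$ is only literally bistochastic when the points $T^i(x)$, respectively $T^j(y)$, are pairwise distinct, so your even-spreading construction (or the perturbation alternative) fills in a detail that the paper's proof glosses over.
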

\begin{proof}
First, note that we can identify a permutation $\sigma\in S_n$ with a discrete measure
\[
\bar\sigma=\sum_{j=0}^{n-1} \hat\delta_{(j,\sigma(j))}
\]
supported on the set $\{0,1,\ldots,n-1\}\times\{0,1,\ldots,n-1\}$. The measure $\frac1n\bar\sigma$ can be also considered as a coupling of
$\m(x,n)$ and $\m(y,n)$ if we interpret  $\hat\delta_{(j,\sigma(j))}$ as a Dirac measure concentrated on the pair $(x_j,T^{\sigma(j)}(y))\in X\times X$.
Furthermore, we have
\begin{multline*}
\Fbar_n(x,y)=\min_{\sigma\in S_n}\frac1n\sum_{j=1}^{n-1} d(x_j,T^{\sigma(j)}(y))=\min_{\sigma\in S_n}\frac1n\int_{X\times X}d(x,y)\,\textrm{d}\bar\sigma(x,y)\\
\geq \inf_{\pi\in\Pi(\m(x,n),\m(y,n))}\int_{X\times X}d(x,y)\,\textrm{d}\pi(x,y) =  W_1(\m(x,n),\m(y,n)).
\end{multline*}
For the reverse inequality, we note that every coupling $\pi\in\Pi(\m(x,n),\m(y,n))$ corresponds to a bistochastic matrix $\Xi$ whose $(i+1,j+1)$ entry is given by $n\pi(\{(T^i(x),T^j(y)\})$, where $0\le i,j\leq n-1$. By the Birkhoff-von Neumann theorem (see \cite[Thm. 12.2.11]{Garling}) $\Xi$ must be a convex combination of permutation matrices, which in turn correspond to measures $\bar\sigma$ with $\sigma\in S_n$. Therefore we can write
\[
\pi=\sum_{\sigma\in S_n}\lambda_\sigma\frac1n\bar\sigma,
\]
where $\lambda_\sigma$ ($\sigma\in S_n$) are nonnegative real coefficients summing up to $1$. It follows that
\[
\int_{X\times X}d(x,y)\,\textrm{d}\pi(x,y)=\sum_{\sigma\in S_n}\lambda_\sigma\frac1n\int_{X\times X}d(x,y)\,\textrm{d}\bar\sigma(x,y).
\]
We conclude that for every $\pi\in\Pi(\m(x,n),\m(y,n))$ it holds
\[
\int_{X\times X}d(x,y)\,\textrm{d}\pi(x,y)
\ge \min_{\sigma\in S_n}\frac1n\int_{X\times X}d(x,y)\,\textrm{d}\bar\sigma(x,y)=\Fbar_n(x,y).
\]
Thus, $W_1(\m(x,n),\m(y,n))\ge \Fbar_n(x,y)$, which finishes the proof.
\end{proof}

\begin{cor}\label{fn}\label{cor:En-Wasser}
    Let $(X,T)$ be a topological dynamical system.
    \begin{enumerate}
        \item For every $\eps>0$, there exists $\delta > 0$ such that for every $x,y\in X$ and $n\in \mathbb{N}$ with $\rho(\m(x,n),\m(y,n))<\delta$ we have $\Fbar_n(x,y)<\eps$.
        \item For every $\eps>0$, there exists $\delta > 0$ such that for every $x,y\in X$  and $n\in \mathbb{N}$  with $\Fbar_n(x,y)<\delta$ we have   $\rho(\m(x,n),\m(y,n))<\eps$.
    \end{enumerate}
\end{cor}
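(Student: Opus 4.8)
The plan is to recognise that the two parts of the corollary together assert that $\rho$ and $\Fbar_n$ are uniformly equivalent on $X$ \emph{uniformly in $n$}, and to reduce this to a statement about two metrics on all of $\M(X)$ by means of the identity $\Fbar_n(x,y)=W_1(\m(x,n),\m(y,n))$ supplied by Lemma \ref{lem:En=Wasser}. After this substitution, part (1) becomes the uniform continuity of the identity map $\mathrm{id}\colon(\M(X),\rho)\to(\M(X),W_1)$ and part (2) becomes the uniform continuity of $\mathrm{id}\colon(\M(X),W_1)\to(\M(X),\rho)$, each specialised afterwards to the empirical measures $\m(x,n)$ and $\m(y,n)$. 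The decisive feature is that the $\delta$ extracted from the equivalence of $\rho$ and $W_1$ depends only on $\eps$ and not on $n$, precisely because the empirical measures are merely particular elements of $\M(X)$, so the desired uniformity over $n$ is free.

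First I would record that both $\rho$ and $W_1$ are compatible with the weak$^*$ topology on $\M(X)$. For $\rho$ this is the classical fact that the Prokhorov metric metrises weak$^*$ convergence on a separable metric space; for $W_1$ it is exactly the compatibility quoted in the paragraph preceding Lemma \ref{lem:En=Wasser}. Next I would invoke the weak$^*$ compactness of $\M(X)$, which holds because $X$ is compact (Prokhorov's theorem, or Banach--Alaoglu applied to $C(X)^*$). Consequently $(\M(X),\rho)$ and $(\M(X),W_1)$ are two compact metric spaces carrying one and the same topology, whence the identity map between them is a homeomorphism.

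The final step is the elementary but decisive observation that a homeomorphism between compact metric spaces is automatically uniformly continuous, and so is its inverse. Applying this to $\mathrm{id}\colon(\M(X),\rho)\to(\M(X),W_1)$ produces, for each $\eps>0$, a $\delta>0$ with $\rho(\mu,\nu)<\delta\implies W_1(\mu,\nu)<\eps$ for all $\mu,\nu\in\M(X)$; taking $\mu=\m(x,n)$ and $\nu=\m(y,n)$ and rewriting $W_1(\m(x,n),\m(y,n))$ as $\Fbar_n(x,y)$ via Lemma \ref{lem:En=Wasser} yields part (1). Part (2) follows symmetrically from the uniform continuity of the inverse map.

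I do not anticipate a genuine obstacle here: the one place that demands care is the justification that $\rho$ and $W_1$ induce the same compact topology, and both ingredients---the compatibility of $W_1$ and the compactness of $\M(X)$---have already been supplied or cited in the excerpt. The apparent difficulty of obtaining uniformity in $n$ dissolves entirely once the comparison is phrased as an equivalence of metrics on the whole of $\M(X)$, and everything else is the standard automatic-uniform-continuity argument for maps out of a compact metric space.
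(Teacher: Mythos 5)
Your proposal is correct and follows essentially the same route as the paper: the authors likewise combine Lemma \ref{lem:En=Wasser} with the fact that $W_1$ and $\rho$ are both compatible with the weak$^*$ topology on the compact metrizable space $\M(X)$, hence uniformly equivalent. Your write-up merely spells out the compactness-implies-uniform-continuity step that the paper leaves implicit.
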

\begin{proof}Since $W_1$ and $\rho$ are compatible with the weak$^*$ topology on $\M(X)$, the
corollary follows immediately from the fact that on a compact metrizable space all compatible metrics are uniformly equivalent.
\end{proof}
\begin{lem}\label{lemma:2}
Let $(X,T)$ be a topological dynamical system.
For every $\eps>0$ there is a $\delta>0$ such that for every $x,y\in X$ with  $\Fbar(x, y)<\delta$ there exists an $N=N(x,y)\in\mathbb{N}$ so that $\rho(\m(x, n), \m(y,n))<\eps$ for all $n\ge N$.
\end{lem}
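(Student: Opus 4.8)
The plan is to read off the statement almost immediately from Corollary~\ref{cor:En-Wasser}(2) together with the identity $\Fbar(x,y)=\limsup_{n\to\infty}\bar E_n(x,y)$ recorded in \eqref{eq:mean-metric-En}. The whole content is a matter of arranging the quantifiers correctly, so that the single point-dependent object in the conclusion is the index $N$.

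First I would fix $\eps>0$ and apply Corollary~\ref{cor:En-Wasser}(2) to obtain a threshold $\delta>0$ with the property that $\bar E_n(x,y)<\delta$ implies $\rho(\m(x,n),\m(y,n))<\eps$, for \emph{every} $x,y\in X$ and \emph{every} $n\in\mathbb{N}$. This uniform $\delta$ is the crucial input: it is chosen once, before any points or indices are specified, and it is exactly the $\delta$ that the lemma asks for. (I would keep in mind that, via Lemma~\ref{lem:En=Wasser}, the quantity $\bar E_n$ appearing in the corollary is the Wasserstein distance $W_1(\m(x,n),\m(y,n))$, but I need only the corollary's black-box form here.)

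Next I would take any $x,y\in X$ with $\Fbar(x,y)<\delta$. By \eqref{eq:mean-metric-En} this reads $\limsup_{n\to\infty}\bar E_n(x,y)<\delta$, and by the definition of $\limsup$ there exists an $N=N(x,y)\in\mathbb{N}$ such that $\bar E_n(x,y)<\delta$ for all $n\ge N$. Feeding each such $n$ into the implication from the first step gives $\rho(\m(x,n),\m(y,n))<\eps$ for all $n\ge N$, which is precisely the desired conclusion.

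There is no genuine obstacle in this argument; the only thing to watch is the logical structure. The threshold $\delta$ must be extracted from the corollary \emph{before} $x,y$ are chosen, so that it is independent of them, while the index $N$ is necessarily allowed to depend on $x,y$ because the rate at which the sequence $(\bar E_n(x,y))_n$ settles below $\delta$ is governed by the limsup and cannot be controlled uniformly in the points. Getting that dependence right is the entire subtlety of the statement, and once the uniformity of $\delta$ in both the points and the index is supplied by Corollary~\ref{cor:En-Wasser}(2), the rest is the elementary translation of $\limsup<\delta$ into eventual smallness.
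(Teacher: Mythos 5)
Your proof is correct and follows essentially the same route as the paper's: both extract a uniform $\delta$ from Corollary~\ref{cor:En-Wasser}(2) before choosing the points, then use \eqref{eq:mean-metric-En} to convert $\Fbar(x,y)<\delta$ into $\bar E_n(x,y)<\delta$ for all $n\ge N(x,y)$ (note $\Fbar_n$ and $\bar E_n$ denote the same quantity). Nothing is missing; the quantifier discussion you give is exactly the point of the lemma.
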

\begin{proof}
Given $\eps>0$ we use Corollary \ref{cor:En-Wasser} to find  $\delta>0$ such that for every $n\in\mathbb{N}$ and  $x, y\in X$ we have
\begin{equation}\label{eq:imp}
    \Fbar_n(x,y)<\delta\implies\rho(\m(x,n),\m(y,n))<\eps.
\end{equation}
Now, if $\Fbar(x,y)<\delta$, then $\Fbar_n(x,y)<\delta$ for all sufficiently large $n$'s (cf. \eqref{eq:mean-metric-En}), so $\rho(\m(x, n), \m(y, n))<\eps$ for all $n\ge N=N(x,y)$ by \eqref{eq:imp}.
\end{proof}

The version of the following result for Besicovitch pseudo-metric was first proved in \cite{KLO17}. Since we always have $\Fbar(x,y)\le D_B(x,y)$, our Theorem \ref{theorem:1} contains \cite[Theorem 7]{KLO17} as a special case.

\begin{thm}\label{theorem:1}
For a topological dynamical system $(X,T)$,
the map $(X, \Fbar)\to (2^{\MT(X)}, \rho_H)$,
$x\mapsto \V(x)$ is uniformly continuous,
that is for every $\eps>0$ there exists a $\delta>0$ such that
for every $x,y\in X$ with $\Fbar(x,y)<\delta$ one has
$\rho_H(\V(x),\V(y))<\eps$.
\end{thm}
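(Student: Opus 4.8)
The plan is to derive the statement directly from Lemma \ref{lemma:2}, which already packages the quantitative content, and then to pass from the eventual closeness of empirical measures to closeness of their sets of limit points by a routine compactness argument. The key observation is that $\V(x)$ and $\V(y)$ are precisely the sets of weak$^*$ limit points of the sequences $(\m(x,n))_n$ and $(\m(y,n))_n$, so bounding $\rho_H(\V(x),\V(y))$ amounts to comparing these sequences along suitable subsequences.

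Concretely, given $\eps>0$ I would first apply Lemma \ref{lemma:2} with $\eps/2$ in place of $\eps$ to obtain a $\delta>0$ such that whenever $\Fbar(x,y)<\delta$ there is an $N=N(x,y)\in\mathbb{N}$ with $\rho(\m(x,n),\m(y,n))<\eps/2$ for all $n\ge N$. I claim this same $\delta$ witnesses the uniform continuity asserted in the theorem. To verify this, fix a pair $x,y$ with $\Fbar(x,y)<\delta$ and take an arbitrary $\mu\in\V(x)$; by definition there is a sequence $n_k\to\infty$ with $\m(x,n_k)\to\mu$. Since $\M(X)$ is compact, after passing to a subsequence I may assume $\m(y,n_k)\to\nu$ for some $\nu\in\M(X)$, and because $n_k\to\infty$ this $\nu$ is a limit point of $(\m(y,n))_n$, hence $\nu\in\V(y)$. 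For all sufficiently large $k$ we have $n_k\ge N$, so $\rho(\m(x,n_k),\m(y,n_k))<\eps/2$; letting $k\to\infty$ and using continuity of the Prokhorov metric under weak$^*$ convergence gives $\rho(\mu,\nu)\le\eps/2$.

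This shows that every point of $\V(x)$ lies within Prokhorov distance $\eps/2$ of some point of $\V(y)$. Exchanging the roles of $x$ and $y$, and using the symmetry $\Fbar(y,x)=\Fbar(x,y)<\delta$, yields the matching statement with $\V(x)$ and $\V(y)$ interchanged. Combining the two one-sided bounds with the definition of the Hausdorff metric then gives $\rho_H(\V(x),\V(y))\le\eps/2<\eps$, as required. Both sets are non-empty closed subsets of $\MT(X)$ by the discussion in the preliminaries, so they are legitimate elements of $2^{\MT(X)}$ and the Hausdorff distance is well defined.

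The one point that requires a moment's care—though it turns out to be harmless—is that the threshold $N$ supplied by Lemma \ref{lemma:2} depends on the pair $(x,y)$ and is \emph{not} uniform. This causes no trouble, because uniform continuity of $x\mapsto\V(x)$ demands only a uniform choice of $\delta$, which Lemma \ref{lemma:2} already provides; the non-uniform $N$ is entirely absorbed inside the per-pair subsequence argument, where only the existence of \emph{some} $N$ for the given pair is used. Thus the main work has effectively been done upstream in Lemmas \ref{lemma:1}, \ref{lem:En=Wasser} and Corollary \ref{cor:En-Wasser}, and the remaining step here is the standard limit-point comparison outlined above.
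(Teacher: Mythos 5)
Your proposal is correct and follows essentially the same route as the paper: both apply Lemma \ref{lemma:2} to get a uniform $\delta$, then for each $\mu\in\V(x)$ extract a subsequence $(n_k)$ with $\m(x,n_k)\to\mu$, pass to a further subsequence so that $\m(y,n_k)\to\nu\in\V(y)$, compare the empirical measures along $(n_k)$ via the per-pair threshold $N(x,y)$, and symmetrize to bound the Hausdorff distance. The only cosmetic difference is bookkeeping: the paper uses $\eps/6$ and an explicit three-term triangle inequality, whereas you take the limit directly using continuity of $\rho$ (which is legitimate since $\rho$ metrizes the weak$^*$ topology); your remark that the non-uniform $N$ is harmless matches the paper's usage exactly.
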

\begin{proof}
Fix $\eps>0$.  Applying Lemma \ref{lemma:2} for $\eps/6$ in place of $\eps$, we get a desired constant $\delta>0$. Without loss of generality, assume that $\delta<\eps/6$. Choose $x, y \in X$ such that $\Fbar(x,y) < \delta$.
Pick $\mu\in \V(x)$.
Find a subsequence $\{n_k\}_{k=0}^{\infty}$ such that $\m(x, n_k)\to \mu$ as $k\to\infty$. Let $N_1\in\mathbb{N}$ be such that for all $k$ with $n_k\ge N_1$ it holds that $\rho(\mu, \m(x, n_k))<\eps/6$.
Without loss of generality, assume that $\m(y, n_k)\to \nu$  as $k\to\infty$. Then $\nu \in\V(y)$ and we can find $N_2\in\mathbb{N}$ be such that for all $k$ with $n_k\ge N_2$ it holds that $\rho(\nu, \m(y, n_k))<\eps/6$.
By Lemma \ref{lemma:2}, there exists $N=N(x,y)\in\mathbb{N}$ such that for all $k$ with $n_k\ge N$, we have $\rho(\m(x, n_k), \m(y, n_k))\leq \eps/6$. Now for $k$ with $n_k\ge \max\{N,N_1,N_2\}$ we have
\[\rho(\mu, \nu)\leq \rho(\mu, \m(x, n_k))+\rho(\m(x, n_k), \m(y, n_k))+ \rho (\m(y, n_k), \nu)\le \eps/2.
\]
Therefore,
\begin{equation*}
    \rho(\mu, \V(y))=\min \{ \rho(\mu, \eta)\colon  \eta \in \V(y)\}\le \eps/2.
\end{equation*}
Hence $\V(x)\subset (\V(y))^\eps$ and
exchanging the roles of $x$ and $y$ in the argument above we have $\V(y)\subset (\V(x))^\eps$. Thus $\rho_H(\V(x), \V(y))< \eps$.
\end{proof}

\begin{cor} \label{cor:fbar-0}
Let $(X,T)$ be a topological dynamical system.
For every $x,y\in X$  if $\Fbar(x,y)=0$ then
$\V(x)=\V(y)$.
\end{cor}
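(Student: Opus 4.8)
The plan is to derive this corollary as a direct consequence of Theorem \ref{theorem:1}, which establishes the uniform continuity of the map $x\mapsto\V(x)$ from $(X,\Fbar)$ to $(2^{\MT(X)},\rho_H)$. The key observation is that $\Fbar(x,y)=0$ means $x$ and $y$ are at distance zero in the pseudo-metric $\Fbar$, and uniform continuity forces the images under a continuous map to coincide exactly.

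First I would fix $x,y\in X$ with $\Fbar(x,y)=0$ and argue that $\rho_H(\V(x),\V(y))=0$. Given any $\eps>0$, Theorem \ref{theorem:1} provides a $\delta>0$ such that $\Fbar(x,y)<\delta$ implies $\rho_H(\V(x),\V(y))<\eps$. Since $\Fbar(x,y)=0<\delta$ for every such $\delta$, we conclude $\rho_H(\V(x),\V(y))<\eps$ for every $\eps>0$, and hence $\rho_H(\V(x),\V(y))=0$. Because $\V(x)$ and $\V(y)$ are non-empty closed subsets of $\M(X)$ (as noted in the Preliminaries, $\V(x)$ is always a non-empty closed connected subset of $\MT(X)$), and because $\rho_H$ is a genuine metric on the hyperspace $2^{\M(X)}$, the vanishing of the Hausdorff distance between two closed sets forces $\V(x)=\V(y)$.

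There is essentially no serious obstacle here, since the result is a formal corollary. The only point requiring a moment's care is justifying the final step: that $\rho_H(A,B)=0$ implies $A=B$ for closed sets $A,B$. This is precisely the separation axiom for the Hausdorff metric and relies on $\V(x),\V(y)$ being closed, which has already been recorded. I would phrase the proof in two or three sentences: invoke Theorem \ref{theorem:1} with an arbitrary $\eps$, observe that $\Fbar(x,y)=0$ satisfies the hypothesis for every admissible $\delta$, and then use that a metric (here $\rho_H$) separates distinct points of $2^{\M(X)}$ to conclude the two closed sets are equal.
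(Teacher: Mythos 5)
Your proof is correct and is exactly the argument the paper intends: Corollary \ref{cor:fbar-0} is stated there without proof as an immediate consequence of Theorem \ref{theorem:1}, and your write-up (apply the theorem for every $\eps>0$, deduce $\rho_H(\V(x),\V(y))=0$, then use that $\rho_H$ is a genuine metric on closed subsets of $\M(X)$) fills in precisely the intended details.
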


It is natural to ask that whether the converse of Corollary \ref{cor:fbar-0} is true.
We will show in subsection \ref{subsec:example} that in general, the converse need not to hold. The example shows that $\V(x)=\V(y)$ without $\m(x,n)$ and $\m(y,n)$ being close to each other for all sufficiently large $n$. The example also shows that the assumption that $x$ is a generic point in Corollary \ref{thm1} is indispensable.

\begin{cor}\label{thm1}
Let $(X,T)$ be a topological dynamical system. If $x\in X$ is a generic point, then for every $\eps>0$, there is a $\delta > 0$ such that for every $y\in X$
with $\rho_H(\V(x),\V(y))<\delta$ one has
$ \Fbar(x,y)\le \eps$.	
\end{cor}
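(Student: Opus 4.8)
The plan is to reverse the estimate of Theorem \ref{theorem:1} by exploiting genericity of $x$ together with Corollary \ref{fn}. The crucial point is that genericity makes the sequence $(\m(x,n))_{n=1}^\infty$ actually converge: since $x$ is generic, $\V(x)$ is a singleton, say $\V(x)=\{\mu\}$ with $\mu=\mu(x)$, and a sequence in the compact metric space $(\M(X),\rho)$ whose only limit point is $\mu$ must converge to $\mu$. Thus $\rho(\m(x,n),\mu)\to 0$. Without this, the argument below would break, which is precisely the phenomenon illustrated by the example announced after Corollary \ref{cor:fbar-0}.

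First I would fix $\eps>0$ and apply Corollary \ref{fn}(1) to obtain $\delta_0>0$ such that $\rho(\m(x,n),\m(y,n))<\delta_0$ implies $\Fbar_n(x,y)<\eps$, for all $x,y\in X$ and all $n\in\mathbb{N}$. I then set $\delta=\delta_0/2$ (any value strictly smaller than $\delta_0$ works) and take an arbitrary $y\in X$ with $\rho_H(\V(x),\V(y))<\delta$.

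The technical heart of the argument is to pass from this Hausdorff bound to a $\limsup$ bound on $\rho(\m(y,n),\mu)$. Since $\V(x)=\{\mu\}$, the hypothesis forces $\V(y)\subset\{\mu\}^\delta$, that is, $\rho(\nu,\mu)<\delta$ for every $\nu\in\V(y)$. I claim this yields $\limsup_{n\to\infty}\rho(\m(y,n),\mu)\le\delta$. Indeed, choosing a subsequence realising the $\limsup$ and extracting, by compactness of $\M(X)$, a further subsequence of $(\m(y,n))$ converging to some $\nu$, we necessarily have $\nu\in\V(y)$, so the value of the $\limsup$ equals $\rho(\nu,\mu)\le\delta$. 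This compactness step, together with the correct use of genericity, is the main obstacle; everything else is routine.

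Finally I would combine the two convergence facts via the triangle inequality for $\rho$:
\[
\rho(\m(x,n),\m(y,n))\le\rho(\m(x,n),\mu)+\rho(\mu,\m(y,n)).
\]
Passing to the upper limit gives $\limsup_{n\to\infty}\rho(\m(x,n),\m(y,n))\le 0+\delta=\delta<\delta_0$, so $\rho(\m(x,n),\m(y,n))<\delta_0$ for all sufficiently large $n$. By the choice of $\delta_0$ this gives $\Fbar_n(x,y)<\eps$ for all large $n$, and therefore $\Fbar(x,y)=\limsup_{n\to\infty}\Fbar_n(x,y)\le\eps$, as required.
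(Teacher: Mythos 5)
Your proof is correct and takes essentially the same route as the paper's: both hinge on Corollary \ref{fn}, on genericity forcing $\m(x,n)\to\mu$, and on a compactness extraction showing that every limit point of $(\m(y,n))_{n=1}^\infty$ lies within $\delta$ of $\mu$. The only difference is organisational: the paper argues along a subsequence realising $\Fbar(x,y)=\lim_{k\to\infty}\Fbar_{n_k}(x,y)$, whereas you bound $\rho(\m(x,n),\m(y,n))$ for all sufficiently large $n$, a cosmetic variant of the same argument.
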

\begin{proof}
Assume $\V(x)=\{\mu\}$.
Let $\eps>0$. By Corollary \ref{fn},
	there exists a $\delta > 0$  such that for every $y\in X$ and $n\in \mathbb{N}$,
we have $$\rho(\m(x,n),\m(y,n))<\delta\Rightarrow\Fbar_n(x,y)<\eps.$$
Fix a point $y\in X$ with	$\rho_H(\V(x),\V(y))<\delta$.
Note that there exists a strictly increasing sequence $\{n_k\}_{k=1}^\infty$ such that
\[
\Fbar_{n_k}(x,y)\to \Fbar(x,y)\quad (k\to\infty).
\]
Passing to a subsequence once more we can assume
$\m(y,n_k)\to \nu$ with $k\to\infty$.
Then $\nu\in \V(y)$ and $\rho(\mu,\nu)<\delta$.
Note that
$\m(x,n_k)\to \mu$ as $k\to\infty$. There exists an $N_0\in\mathbb{N}$ such that
$\rho(\m(x,n_k),\m(y,n_k))<\delta$ for all $k>N_0.$
Hence for all $k>N_0$, we have $\Fbar_{n_k}(x,y)<\eps.$
Hence
\[\lim_{k\to \infty}\Fbar_{n_k}(x,y)=\Fbar (x,y)\leq \eps.\qedhere\]
\end{proof}

As an immediate consequence we obtain new proof of the following two facts, originally proved in \cite{ZZ20}.

\begin{cor}[{\cite[Proposition 5.3]{ZZ20}}] \label{cor:fbar0}
Let $(X,T)$ be a topological dynamical system. If $x\in X$ is a generic point, then for every $y\in X$,
$\V(x)=\V(y)$ if and only if $\Fbar(x,y)=0$.
\end{cor}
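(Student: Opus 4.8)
The plan is to prove the two implications of the equivalence separately, observing at the outset that they draw on genuinely different ingredients and that the genericity of $x$ is needed for only one of them.

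First I would dispose of the easy direction, namely $\Fbar(x,y)=0 \implies \V(x)=\V(y)$. This is precisely the content of Corollary \ref{cor:fbar-0}, which is stated for arbitrary pairs $x,y\in X$ and makes no use of genericity. So this implication requires no additional work and holds even without the standing hypothesis that $x$ is generic.

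For the reverse implication $\V(x)=\V(y) \implies \Fbar(x,y)=0$, I would invoke Corollary \ref{thm1}, which is exactly the place where genericity enters. The key observation is that the hypothesis $\V(x)=\V(y)$ says that the two sets of quasi-generated measures coincide, and hence $\rho_H(\V(x),\V(y))=0$. Since $x$ is assumed to be generic, Corollary \ref{thm1} applies: for every $\eps>0$ it furnishes a $\delta>0$ such that $\rho_H(\V(x),\V(y))<\delta$ forces $\Fbar(x,y)\le\eps$. Because $\rho_H(\V(x),\V(y))=0<\delta$, the hypothesis of Corollary \ref{thm1} is met for that $\delta$, and we conclude $\Fbar(x,y)\le\eps$. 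As $\eps>0$ was arbitrary, this yields $\Fbar(x,y)=0$, completing the argument.

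I do not expect any serious obstacle, since both halves reduce to results already in hand; the proof is genuinely a two-line corollary. The only conceptual point worth flagging is \emph{why} genericity is indispensable in the second implication: the mechanism of Corollary \ref{thm1} converts closeness of $\V(x)$ and $\V(y)$ in the Hausdorff metric into closeness of $x$ and $y$ in $\Fbar$, and its proof relies on $\V(x)$ being a singleton $\{\mu\}$ so that a single subsequence realising $\Fbar(x,y)$ can be pinned against $\mu$. Without this assumption the translation breaks down, which is consistent with the counterexample announced after Corollary \ref{cor:fbar-0} showing that $\V(x)=\V(y)$ need not imply $\Fbar(x,y)=0$ in general.
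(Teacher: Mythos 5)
Your proposal is correct and is exactly the paper's intended argument: the paper presents this corollary as an immediate consequence of Corollary \ref{cor:fbar-0} (for the implication from $\Fbar(x,y)=0$) and Corollary \ref{thm1} applied with $\rho_H(\V(x),\V(y))=0<\delta$ (for the converse, where genericity of $x$ is used). Your added remark about why genericity is indispensable matches the paper's discussion of the counterexample in its Subsection 3.1.
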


\begin{cor}[{\cite[Theorem 5.1]{ZZ20}}] \label{cor:uniquely-erogdic}
A topological dynamical system  $(X,T)$ is uniquely ergodic
if and only if  $\Fbar(x,y)=0$ for all $x,y \in X$.
\end{cor}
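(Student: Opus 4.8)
The plan is to prove the two implications separately, leaning on the two immediately preceding corollaries together with the standard structural facts about $\MT(X)$ recalled in the Preliminaries (that $\V(x)$ is a non-empty subset of $\MT(X)$, that every ergodic measure has a generic point, and that $\MTe(X)$ is exactly the set of extreme points of the compact convex set $\MT(X)$).

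For the forward implication I would suppose $(X,T)$ is uniquely ergodic, so that $\MT(X)=\{\mu\}$ for a single measure $\mu$. Since for every $x\in X$ the set $\V(x)$ is a non-empty subset of $\MT(X)$, it must equal $\{\mu\}$; in particular every point of $X$ is generic. Then, fixing arbitrary $x,y\in X$, both are generic and satisfy $\V(x)=\{\mu\}=\V(y)$, so Corollary \ref{cor:fbar0} applied to the generic point $x$ immediately yields $\Fbar(x,y)=0$.

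For the converse I would assume $\Fbar(x,y)=0$ for all $x,y\in X$. By Corollary \ref{cor:fbar-0} this forces $\V(x)=\V(y)$ for all pairs, so the map $x\mapsto\V(x)$ is constant; write $\V(x)=K$ for every $x\in X$, with $K\subseteq\MT(X)$. The crux is to identify $K$ as a single ergodic measure. To this end, let $\nu\in\MTe(X)$ be any ergodic measure. It admits a generic point $x_\nu$, so $\V(x_\nu)=\{\nu\}$; but $\V(x_\nu)=K$, hence $K=\{\nu\}$. Since this holds for every ergodic $\nu$, applying it to two ergodic measures $\nu_1,\nu_2$ gives $\{\nu_1\}=K=\{\nu_2\}$ and therefore $\nu_1=\nu_2$, so $\MTe(X)$ is a singleton $\{\mu\}$. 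Finally I would upgrade this to unique ergodicity: as $\MT(X)$ is a non-empty compact convex set whose extreme points are precisely $\MTe(X)=\{\mu\}$, the Krein--Milman theorem gives $\MT(X)=\{\mu\}$, i.e. $(X,T)$ is uniquely ergodic.

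The only genuinely non-formal part is this converse direction, and within it the step that deserves attention is the passage from "the map $x\mapsto\V(x)$ is constant" to "the constant value is one ergodic measure." The hard part will be recognising that constancy of $\V$ simultaneously pins the common value $K$ down to \emph{each} individual ergodic measure through its generic point; once this observation is made the remaining reductions are purely formal applications of the ergodic-theoretic facts quoted above.
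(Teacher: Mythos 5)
Your proof is correct and takes essentially the same route as the paper, which states this corollary as an immediate consequence of the preceding results: the forward direction via the fact that every point of a uniquely ergodic system is generic together with Corollary \ref{cor:fbar0}, and the converse via Corollary \ref{cor:fbar-0}, generic points of ergodic measures, and the extreme-point structure of $\MT(X)$ quoted in the Preliminaries. The Krein--Milman step you make explicit is exactly the standard fact the paper relies on, so there is nothing to fix.
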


\begin{rem}
Corollary \ref{cor:uniquely-erogdic} and the results of \cite{GR17,LTY15}, and \cite{GRK20} show that (minimal) mean equicontinuous systems form a proper subclass of (minimal) Feldman-Katok equicontinuous systems (see \cite{GRK20} for more details), and the latter class is properly contained in the class of (minimal) $\Fbar$-continuous topological dynamical systems.
\end{rem}

\begin{cor}\label{cor:erg}
Let $(X,T)$ be a topological dynamical system. If $\mu,\nu\in\MT(X)$ then for every $x\in X$ which is generic for $\mu$ and $y\in X$ generic for $\nu$ we have
\[
\Fbar(x,y)=\lim_{n\to\infty} \Fbar_n(x,y)=W_1(\mu,\nu).
\]
Furthermore, if $\mu_n\in\MT(X)$ and $x_n\in X$ is its generic point for $n=1,2,\ldots$, then for every $x\in X$ we have $\Fbar(x_n,x)\to 0$ as $n\to\infty$ if and only if $x$ is a generic point for a measure $\mu\in\MT(X)$ such that $W_1(\mu_n,\mu)\to 0$ as $n\to\infty$.
\end{cor}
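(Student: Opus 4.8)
The plan is to deduce both statements from Lemma \ref{lem:En=Wasser}, which identifies $\Fbar_n(x,y)$ with the Wasserstein distance $W_1(\m(x,n),\m(y,n))$, combined with the fact that $W_1$ is compatible with the weak$^*$ topology on $\M(X)$. First I would treat the equality $\Fbar(x,y)=\lim_n\Fbar_n(x,y)=W_1(\mu,\nu)$. Since $x$ is generic for $\mu$ and $y$ is generic for $\nu$, the empirical measures satisfy $\m(x,n)\to\mu$ and $\m(y,n)\to\nu$ in the weak$^*$ topology. Because $W_1$ metrizes this topology, it is jointly continuous (this follows immediately from the triangle inequality $|W_1(\m(x,n),\m(y,n))-W_1(\mu,\nu)|\le W_1(\m(x,n),\mu)+W_1(\m(y,n),\nu)$), so by Lemma \ref{lem:En=Wasser} we get $\Fbar_n(x,y)=W_1(\m(x,n),\m(y,n))\to W_1(\mu,\nu)$. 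In particular the sequence $(\Fbar_n(x,y))_{n}$ converges, its $\limsup$ equals its limit, and \eqref{eq:mean-metric-En} yields $\Fbar(x,y)=W_1(\mu,\nu)$.

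For the second statement, the ``if'' direction is immediate: if $x$ is generic for some $\mu\in\MT(X)$ with $W_1(\mu_n,\mu)\to 0$, then applying the first statement to the generic points $x_n$ (for $\mu_n$) and $x$ (for $\mu$) gives $\Fbar(x_n,x)=W_1(\mu_n,\mu)\to 0$.

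The hard part will be the ``only if'' direction, because before I may apply the first statement I must show that the limit point $x$ is itself generic. Here I would invoke Theorem \ref{theorem:1}: the uniform continuity of $x\mapsto\V(x)$ from $(X,\Fbar)$ to $(2^{\MT(X)},\rho_H)$ converts $\Fbar(x_n,x)\to 0$ into $\rho_H(\V(x_n),\V(x))\to 0$. Since each $x_n$ is generic, $\V(x_n)=\{\mu_n\}$ is a singleton, so for every $\eta>0$ and all large $n$ the inclusion $\V(x)\subset(\{\mu_n\})^\eta$ forces every measure of $\V(x)$ to lie within $\rho$-distance $\eta$ of $\mu_n$; hence $\diam_\rho\V(x)\le 2\eta$. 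As $\eta>0$ is arbitrary, $\diam_\rho\V(x)=0$, so $\V(x)$ is a single point and $x$ is generic, say for $\mu$. A final application of the first statement gives $W_1(\mu_n,\mu)=\Fbar(x_n,x)\to 0$, completing the argument.
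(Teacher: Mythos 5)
Your proof is correct and takes essentially the same route as the paper: the first equality chain is exactly the paper's argument, namely Lemma \ref{lem:En=Wasser} together with the fact that $W_1$ is compatible with the weak$^*$ topology, applied to $\m(x,n)\to\mu$ and $\m(y,n)\to\nu$. The paper disposes of the ``furthermore'' part by simply asserting it follows from the first part; your appeal to Theorem \ref{theorem:1} to show that the limit point $x$ must itself be generic (via $\diam_\rho \V(x)=0$) is a correct and natural way of supplying the detail the paper leaves implicit.
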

\begin{proof}
Note that the ``furthermore'' statement follows from the first part. For the proof of the first part, simply note that
\[
W_1(\mu,\nu)=\lim_{n\to\infty}W_1(\m(x,n),\m(y,n))=\lim_{n\to\infty}\Fbar_n(x,y).
\qedhere\]
\end{proof}
Note that a weak$^*$ limit of a sequence of ergodic measures need not to be ergodic, therefore even if we assume that $\mu_n\in\MTe(X)$ for every $n\in\mathbb{N}$ in the Corollary \ref{cor:erg}, we still cannot conclude that the measure $\mu$ is ergodic. Roughly speaking, the $\Fbar$-limit of ergodic points need not be an ergodic point. Note that results saying that ergodicity is a closed property are available for the Besicovitch pseudometric \cite[Thm. 15]{KLO17} and Feldman-Katok pseudometric \cite{KL17}.  Corollary \ref{cor:erg} shows that these results cannot be extended to $\Fbar$-pseudometric.
\subsection{An Example} \label{subsec:example}
In this subsection we will provide an example showing that $\V(x)=\V(y)$ need not imply $\Fbar(x,y)=0$. This shows that the assumption that $x$ is a generic point can not be omitted 
neither in Corollary \ref{thm1} nor in  Corollary \ref{cor:fbar0}.

We endow the space of all infinite sequences of symbols from
$\{0,1\}$ indexed by the non-negative integers with the product topology, and denote it by $\{0,1\}^\infty$.
A compatible metric  $\{0,1\}^\infty$ is given by
for any $x=(x_n)_{n=0}^\infty, y=(y_n)_{n=0}^\infty \in \{0,1\}^\infty$
\begin{equation*}
d(x,y)=\begin{cases}
0, & \text{ if } x=y; \\
2^{-k}, & k= \min\{i\geq 0\colon x_i\neq y_i\}.
\end{cases}
\end{equation*}
The shift map $S$ on $\{0,1\}^\infty$ is defined by $S(x_0x_1x_2\ldots)=x_1x_2x_3\ldots$.
Note that given two points $x=(x_n)_{n=0}^\infty$ and $y=(y_n)_{n=0}^\infty$ in the full shift, some $N\ge 1$ and a permutation $\sigma\in S_N$ the sum
\[
\sum_{k=0}^{N-1}d(S^k(x),S^{\sigma(k)}(y))=
\sum_{k=0}^{N-1}d(x_kx_{k+1}\ldots,y_{\sigma(k)}y_{\sigma(k)+1}\ldots)
\]
is bounded below by the number of $0\le k <N$ such that $x_k\neq y_{\sigma(k)}$. This is because $d(x_kx_{k+1}\ldots,y_{\sigma(k)}y_{\sigma(k)+1}\ldots)=1$ provided $x_k\neq y_{\sigma(k)}$. Therefore,
\begin{equation}\label{ineq:fbar-symb-lower-bound}
\min_{\sigma\in S_N}\sum_{k=0}^{N-1}d(S^k(x),S^{\sigma(k)}(y))\ge \left\vert
\sum_{k=0}^{N-1}x_k-\sum_{k=0}^{N-1}y_k
\right\vert.
\end{equation}

Let $(a_n)_{n\ge 1}$ be a sequence of positive integers such that
\begin{equation}\label{eq:dominance}
\frac{a_1+\dotsb+a_n}{a_{n+1}}\to 0\quad\text{as }n\to\infty.
\end{equation}
We also define an auxiliary sequence
$(b_n)_{n\ge 1}$ by
\[
b_n=a_1+\dotsb+a_n \quad\text{for }n=1,2,3\ldots
\]
Define two sequences of finite words over $\{ 0, 1\}$, denoted
$(U_n)_{n\ge 1}$ and $(V_n)_{n\ge 1}$ by
\[
U_n = \begin{cases}0^{a_n} & \text{if $n$ is odd,} \\ 1^{a_n} & \text{if $n$ is even,}
\end{cases}
\quad V_n = \begin{cases}1^{a_n} & \text{if $n$ is odd,} \\ 0^{a_n} & \text{if $n$ is even.}
\end{cases}
\]
Define
\[
x=(x_n)_{n=0}^\infty=U_1U_2U_3\ldots\quad\text{and}\quad y=(y_n)_{n=0}^\infty=V_1V_2V_3\ldots
\]
We claim that $\V(x)=\V(y)$, but $\Fbar(x,y)=1$. We first prove the latter equality.
It follows from \eqref{ineq:fbar-symb-lower-bound} that for every $n\ge 1$ we have
\begin{equation}\label{ineq:ones}
\frac{1}{b_n}
\min_{\sigma\in S_{b_n}}\sum_{k=0}^{b_n-1}d(S^k(x),S^{\sigma(k)}(y))\ge \frac{1}{b_n}(a_n - (a_1+\ldots+a_{n-1})).
\end{equation}
As a consequence of \eqref{eq:dominance} we see that the right-hand side of \eqref{ineq:ones} tends to $1$ as $n\to\infty$. This proves that $\Fbar(x,y)=1$.

Let
\[
K=\{\alpha  \delta_{\bar{0}} + (1-\alpha) \delta_{\bar{1}} : \alpha \in [0, 1]\},\]
where $\delta_{\bar{0}}$, respectively $\delta_{\bar{1}}$ is the Dirac measure concentrated on the fixed point $\bar{0}=000\ldots$, respectively $\bar{1}=111\ldots$.
It is easy to see that
$\V(x)=\V(y)=K$.

\section{\texorpdfstring{$\Fbar$-continuity and mean equicontinuity}{F-bar-continuity and mean equicontinuity}}

In this section, we provide an extended characterisation of $\Fbar$-continuity (Theorem \ref{thm-Fbar-continuous}). The $\Fbar$-continuity was first introduced in \cite{ZZ20}.
Recall that a topological dynamical system $(X,T)$ is called \emph{$\Fbar$-continuous} if for every $\eps>0$
there exists a $\delta>0$ such that for every $x,y\in X$
with $d(x,y)<\delta$ one has $\Fbar(x,y)<\eps$.
We first note a corollary to Lemma \ref{lemma:2}, which is crucial for our approach in this section.

\begin{lem}\label{lem:unique-erg}
If  $(X,T)$ is a topological dynamical system such that the map $\V\colon (X,d)\to (2^{\MT(X)},\rho_H)$, $x\mapsto \V(x)$ is continuous, then for every $x\in X$ the topological dynamical system $(\overline{\orb(x,T)},T)$ is uniquely ergodic.
\end{lem}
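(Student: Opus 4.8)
The plan is to fix $x\in X$, set $Y=\overline{\orb(x,T)}$, and show that $\V$ takes a constant \emph{singleton} value on all of $Y$; since a system $(Y,T)$ in which every point quasi-generates one and the same measure $\nu$ is uniquely ergodic, this finishes the proof. I would split the argument into two independent steps: first, that $\V$ is constant on $Y$; second, that this constant value is a single measure.

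For the first step I would start from the orbit-invariance of $\V$. Comparing $\m(Tz,n)=\frac1n\sum_{i=1}^{n}\hat\delta(T^iz)$ with $\m(z,n)=\frac1n\sum_{i=0}^{n-1}\hat\delta(T^iz)$ shows that the two empirical measures differ by $\frac1n\bigl(\hat\delta(T^nz)-\hat\delta(z)\bigr)$, whose total variation is at most $2/n$; hence $\rho(\m(Tz,n),\m(z,n))\to0$ and the two sequences share their limit points, i.e.\ $\V(Tz)=\V(z)$ for every $z\in X$. Consequently $\V$ is constant, equal to $K:=\V(x)$, on the forward orbit $\orb(x,T)$. Since by hypothesis $\V\colon(X,d)\to(2^{\MT(X)},\rho_H)$ is continuous, the set $\V^{-1}(\{K\})$ is closed; it contains $\orb(x,T)$, which is dense in $Y$, and therefore contains all of $Y$. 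Thus $\V(z)=K$ for every $z\in Y$.

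For the second step I would produce an ergodic measure to pin down $K$. The compact system $(Y,T)$ satisfies $\MT(Y)\ne\emptyset$, so it carries an ergodic measure $\nu\in\MTe(Y)$. By the Birkhoff ergodic theorem the set of generic points of $\nu$ has full $\nu$-measure, and since $\nu(Y)=1$ this set meets $Y$; choose a generic point $z_0\in Y$, so that $\V(z_0)=\{\nu\}$. But $z_0\in Y$ forces $\V(z_0)=K$ by the first step, whence $K=\{\nu\}$. Therefore $\V(z)=\{\nu\}$, equivalently $\m(z,n)\to\nu$, for every $z\in Y$, which is exactly unique ergodicity of $(Y,T)$: any $\mu\in\MT(Y)$ satisfies $\int f\,\dd\mu=\int\bigl(\int f\,\dd\m(z,n)\bigr)\dd\mu(z)\to\int f\,\dd\nu$ for each continuous $f$ by invariance and dominated convergence, so $\mu=\nu$.

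The only genuinely delicate point is the interplay between the two steps. Orbit-invariance alone pins down $\V$ only along the (possibly non-recurrent) forward orbit, while a generic point of an ergodic measure on $Y$ need not lie on that orbit; it is the \emph{continuity} hypothesis that does the essential work, transporting the singleton value $\{\nu\}$ from one lucky generic point to the entire orbit closure. One must also ensure the generic point actually lies in $Y$, which is why the ergodic measure is taken on $Y$ and Birkhoff is applied there. Alternatively, Lemma~\ref{lemma:2} can be used to phrase the same transport in terms of $\Fbar$, but the orbit-invariance argument keeps everything at the level of the map $\V$ itself.
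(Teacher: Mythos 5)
Your proposal is correct and follows essentially the same route as the paper's proof: orbit-invariance of $\V$ plus continuity forces $\V$ to be constant on $\overline{\orb(x,T)}$, and an ergodic measure on the orbit closure together with one of its generic points pins that constant down to a singleton $\{\nu\}$. The paper leaves the orbit-invariance of $\V$, the existence of the generic point, and the final passage from ``every point generic for $\nu$'' to unique ergodicity implicit, whereas you spell them out; these are elaborations of the same argument, not a different one.
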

\begin{proof}
 It is clear that the map $\V$ is constant on $\orb(x,T)$. By the continuity, $\V$ must be also constant on  $\overline{\orb(x,T)}$.
But there exists an ergodic measure $\nu$ supported on $\overline{\orb(X,T)}$ and $\nu$ has a generic point $y\in\overline{\orb(X,T)}$, so $\V(z) = \V(y) =\{\nu\}$ for every $z\in\overline{\orb(X,T)}$.
\end{proof}

The following is essentially Corollary 5.2 of \cite{ZZ20}.
Here we give a proof for completeness.
\begin{lem}\label{lem-Fbar-continuous}
Let $(X,T)$ be a topological dynamical system.
If $(X,T)$ is $\Fbar$-continuous and has a dense orbit
then $(X,T)$ is uniquely ergodic.
\end{lem}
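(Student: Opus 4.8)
The plan is to combine $\Fbar$-continuity with the existence of a dense orbit to conclude that the map $\V$ is constant, and then invoke Lemma \ref{lem:unique-erg} to deduce unique ergodicity. The key engine is Theorem \ref{theorem:1}, which tells us that $x\mapsto\V(x)$ is uniformly continuous from $(X,\Fbar)$ to $(2^{\MT(X)},\rho_H)$, together with the hypothesis that the identity map $(X,d)\to(X,\Fbar)$ is uniformly continuous (this is exactly what $\Fbar$-continuity says). Composing these two facts yields that $\V\colon(X,d)\to(2^{\MT(X)},\rho_H)$ is continuous.

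First I would fix a point $x_0\in X$ whose orbit $\orb(x_0,T)$ is dense in $X$. Since $\Fbar$ is $T$-invariant (i.e. $\Fbar(T^nx,T^my)=\Fbar(x,y)$, noted right after \eqref{eq:mean-metric}), it is immediate that $\V$ is constant along the orbit of $x_0$: indeed $\V(T^k x_0)=\V(x_0)$ for all $k\ge 0$. Next, using the continuity of $\V\colon(X,d)\to(2^{\MT(X)},\rho_H)$ established above, I would extend this constancy from the dense set $\orb(x_0,T)$ to all of $X$: a continuous map that is constant on a dense subset of $X$ is constant on $X$. Hence there is a single closed set $K\subseteq\MT(X)$ with $\V(z)=K$ for every $z\in X$.

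Finally, I would identify $K$ as a singleton consisting of one ergodic measure, which forces unique ergodicity. The cleanest route is to observe that since $\V$ is continuous and constant on $X$, its restriction to the single orbit closure $\overline{\orb(x_0,T)}=X$ meets the hypotheses of Lemma \ref{lem:unique-erg}; applying that lemma directly to $x_0$ gives that $(\overline{\orb(x_0,T)},T)=(X,T)$ is uniquely ergodic. Alternatively, one argues in place: $\MT(X)$ contains an ergodic measure $\nu$, which possesses a generic point $y\in X$, so that $\V(y)=\{\nu\}$; by constancy $\V(z)=\{\nu\}$ for every $z\in X$, and a space on which every point quasi-generates the same single invariant measure is uniquely ergodic.

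The main obstacle, if any, is purely bookkeeping: verifying carefully that the composition of the two uniform-continuity statements does give genuine continuity of $\V$ in the $d$-topology, and that the ergodic measure $\nu$ guaranteed by the nonemptiness of $\MTe(X)$ really does admit a generic point lying in $X$ (this is standard, following from the ergodic theorem). Everything else is a routine density-plus-continuity argument, so I expect no serious difficulty beyond assembling the earlier results in the right order.
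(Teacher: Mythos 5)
Your proof is correct, but it takes a genuinely different route from the paper's. The paper argues entirely at the level of the pseudo-metric: on the dense orbit all pairwise $\Fbar$-distances vanish, so by $\Fbar$-continuity, density, and the triangle inequality, $\Fbar(a,b)=0$ for \emph{all} $a,b\in X$, and then Corollary~\ref{cor:uniquely-erogdic} (unique ergodicity is equivalent to $\Fbar$-diameter $0$) finishes the proof in three lines. You instead work at the level of the measure-valued map $\V$: composing Theorem~\ref{theorem:1} with $\Fbar$-continuity gives continuity of $\V\colon (X,d)\to (2^{\MT(X)},\rho_H)$, after which Lemma~\ref{lem:unique-erg} applied to the dense-orbit point already yields unique ergodicity. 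Note a small redundancy in your write-up: once continuity of $\V$ is established, Lemma~\ref{lem:unique-erg} needs nothing more, so your intermediate steps proving that $\V$ is constant on $X$ are not needed for that ending (they do, however, support your alternative ending via the generic point of an ergodic measure, which is also sound and bypasses Lemma~\ref{lem:unique-erg} entirely). In effect your argument is the implication \ref{Fbar-equi-1}$\Rightarrow$\ref{Fbar-equi-1B} of Theorem~\ref{thm-Fbar-continuous} plus Lemma~\ref{lem:unique-erg}, so it invokes heavier machinery (the Wasserstein--Prokhorov comparison underlying Theorem~\ref{theorem:1}); what it buys is the structural insight that $\Fbar$-continuity alone makes every orbit closure uniquely ergodic, with the dense-orbit hypothesis used only to identify one orbit closure with $X$. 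The paper's proof is lighter, needing only the triangle inequality for $\Fbar$ and the Zheng--Zheng characterisation recorded as Corollary~\ref{cor:uniquely-erogdic}. Both arguments rely only on results appearing earlier in the paper, so neither is circular.
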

\begin{proof}
Pick a point $x\in X$ such that $\orb(x,T)$ is dense in $X$.
Then $\Fbar(a,b)=0$ for every $a,b\in\orb(x,T)$.
As $(X,T)$ is $\Fbar$-continuous and $X=\overline{\orb(x,T)}$,
we have $\Fbar(a,b)=0$ for all $a,b\in X$.
Then $(X,T)$ is uniquely ergodic by Corollary \ref{cor:uniquely-erogdic}.
\end{proof}
\begin{rem}\label{rem:H-cont}
Note that the Hausdorff metric $\rho_H$ agrees with $\rho$ on the space of all singletons in $\M(X)$, that is, we have
\[
\rho_H(\{\mu\},\{\nu\})=\rho(\mu,\nu) \quad\text{for every }\mu,\nu\in \M(X).
\]
Therefore if $(X,T)$ is a topological dynamical system such that every $x\in X$ is generic for some measure, then the map $x\mapsto \mu(x)$ is well defined and its continuity as a map from $(X,d)$ to $(\MT(X),\rho)$ is equivalent to continuity of the map $x\mapsto \V(x)$ treated as a map from $(X,d)$ to $(2^{\MT(X)},\rho_H)$.
\end{rem}
We have the following characterisation of $\Fbar$-continuity.
\begin{thm}\label{thm-Fbar-continuous}
Let $(X,T)$ be a topological dynamical system. Then the following statements are equivalent:
\begin{enumerate}[label=(\arabic*),ref=(\arabic*)]
    \item \label{Fbar-equi-1} $(X,T)$ is $\Fbar$-continuous;
    \item\label{Fbar-equi-1B} the map $\V\colon (X,d)\to (2^{\MT(X)},\rho_H)$ is continuous;
    \item\label{Fbar-equi-1A}
    the family of empirical measure maps
    $(\m(\cdot,n))_{n=1}^\infty$ is uniformly equicontinuous on $X$, where $\m(\cdot,n)\colon X\to \M(X)$ is given by
    \[
    x\mapsto \m(x,n)=\frac{1}{n}\sum_{j=0}^{n-1}\hat{\delta}(T^j(x))\qquad \text{for $x\in X$ and $n\in\mathbb{N}$};
    \]
    \item \label{Fbar-equi-2} for every $x\in X$ the topological dynamical system $(\overline{\orb(x,T)},T)$ is uniquely ergodic and the map $X\ni x\mapsto \mu(x)\in\MTe(X)$ is continuous;
    \item \label{Fbar-equi-3} every $x\in X$ is generic for some ergodic invariant measure  and the map $X\ni x\mapsto \mu(x)\in\MTe(X)$ is continuous;
    \item \label{Fbar-equi-4} every $x\in X$ is generic for some invariant measure  and the map $X\ni x\mapsto \mu(x)\in\MT(X)$ is continuous;
    \item \label{Fbar-equi-5} for every continuous function $f\colon X\to \mathbb{R}$,
    the sequence of continuous functions  $\{\frac{1}{n}\sum_{k=0}^{n-1}f\circ T^k\}$ is pointwise convergent to a continuous function $f^*$;
    \item  \label{Fbar-equi-6} for every continuous function $f\colon X\to \mathbb{R}$,
    there exists a continuous function $f^*\colon X\to \mathbb{R}$ such that for every $x\in X$
    we have
    \[
    \lim_{n-m\to\infty} \frac{1}{n-m}\sum_{k=m}^{n-1}f(T^k(x))=f^*(x);
    \]
    \item \label{Fbar-equi-7} for every continuous function $f\colon X\to \mathbb{R}$,
    the sequence of continuous functions  $\{\frac{1}{n}\sum_{k=0}^{n-1}f\circ T^k\}$ converges uniformly to a continuous function $f^*$.
\end{enumerate}
If any of the conditions \ref{Fbar-equi-1}--\ref{Fbar-equi-7} holds, then the limit continuous function $f^*$ mentioned in \ref{Fbar-equi-5}--\ref{Fbar-equi-7} is the function  given for $x\in X$ by
\[
f^*(x)=\int_X f \dd\mu(x).
\]
\end{thm}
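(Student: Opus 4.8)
The plan is to prove the nine conditions equivalent in two stages. First I would establish a ``soft'' cycle among the measure-theoretic conditions \ref{Fbar-equi-1}, \ref{Fbar-equi-1B}, \ref{Fbar-equi-2}, \ref{Fbar-equi-3}, \ref{Fbar-equi-4}, \ref{Fbar-equi-5}, each implication following almost immediately from the results of Section 3. Then I would attach the three genuinely ``uniform'' conditions \ref{Fbar-equi-1A}, \ref{Fbar-equi-6}, \ref{Fbar-equi-7}, whose content is a uniformity that does \emph{not} come for free from the pointwise statements; this is where the real work lies.

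For the soft cycle: \Implies{Fbar-equi-1}{Fbar-equi-1B} is Theorem \ref{theorem:1} composed with the definition of $\Fbar$-continuity (small $d(x,y)$ gives small $\Fbar(x,y)$, hence small $\rho_H(\V(x),\V(y))$). For \Implies{Fbar-equi-1B}{Fbar-equi-2}, Lemma \ref{lem:unique-erg} yields unique ergodicity of every orbit closure; the unique measure on $\overline{\orb(x,T)}$ is ergodic and $x$ is generic for it, so $\V(x)=\{\mu(x)\}$, and Remark \ref{rem:H-cont} turns continuity of $\V$ into continuity of $x\mapsto\mu(x)$ into $\MTe(X)$. The implications \Implies{Fbar-equi-2}{Fbar-equi-3} and \Implies{Fbar-equi-3}{Fbar-equi-4} are trivial (an ergodic measure is invariant and $\MTe(X)\subset\MT(X)$ carries the subspace topology), and \IFF{Fbar-equi-5}{Fbar-equi-4} is the standard dictionary: pointwise convergence of $\frac1n\sum_{k<n}f\circ T^k$ for every $f\in C(X)$ is weak$^*$ convergence of $\m(x,n)$, i.e. genericity of $x$, while continuity of all the limits is continuity of $x\mapsto\mu(x)$; this step also identifies $f^*(x)=\int_X f\dd\mu(x)$. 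Finally \Implies{Fbar-equi-4}{Fbar-equi-1} is Corollary \ref{cor:erg}: since every point is generic, $\Fbar(x,y)=W_1(\mu(x),\mu(y))$, and the right-hand side is continuous on the compact space $X\times X$ and vanishes on the diagonal, hence is uniformly continuous, which is exactly $\Fbar$-continuity.

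The heart of the argument is \Implies{Fbar-equi-4}{Fbar-equi-7}, which I would prove directly and which I expect to be the main obstacle. Fix $f\in C(X)$ and suppose $g_n:=\frac1n\sum_{k<n}f\circ T^k$ does not converge to $f^*$ uniformly; choose $\eps>0$, $n_j\to\infty$ and $x_j$ with $|g_{n_j}(x_j)-f^*(x_j)|\ge\eps$, and pass to subsequences so that $x_j\to x_0$ and $\m(x_j,n_j)\to\nu$ weak$^*$. A telescoping estimate shows $\nu\in\MT(X)$, and continuity of $f^*$ forces $\nu\ne\mu(x_0)$. The key observation is that for any $G\in C(\MT(X))$ the map $z\mapsto G(\mu(z))$ is continuous (as $\mu$ is) and constant on orbits (as $\mu(Tz)=\mu(z)$); integrating it against $\m(x_j,n_j)$ gives the constant $G(\mu(x_j))\to G(\mu(x_0))$, so the push-forward $\mu_*\nu$ equals $\delta_{\mu(x_0)}$, i.e. $\mu(z)=\mu(x_0)$ for $\nu$-a.e.\ $z$. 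Since every $z$ is generic, $\m(z,n)\to\mu(z)$, so in the ergodic decomposition of $\nu$ the component through $\nu$-a.e.\ $z$ is $\mu(z)=\mu(x_0)$; integrating the decomposition gives $\nu=\mu(x_0)$, a contradiction. Thus \ref{Fbar-equi-7} holds, while \Implies{Fbar-equi-7}{Fbar-equi-5} is trivial.

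It remains to attach \ref{Fbar-equi-6} and \ref{Fbar-equi-1A}. For \Implies{Fbar-equi-7}{Fbar-equi-6}, note $\frac1{n-m}\sum_{k=m}^{n-1}f(T^kx)=g_{n-m}(T^mx)$ and $f^*(T^mx)=f^*(x)$, so uniform convergence of the $g_\ell$ gives the Weyl limit $f^*(x)$ as $n-m\to\infty$; and \Implies{Fbar-equi-6}{Fbar-equi-5} follows by taking $m=0$. For \Implies{Fbar-equi-7}{Fbar-equi-1A}, I would first upgrade \ref{Fbar-equi-7} to uniform convergence $\m(x,n)\to\mu(x)$ in the Prokhorov metric: by compactness of $\M(X)$, for each $\eps>0$ there are finitely many $f_1,\dots,f_K\in C(X)$ and $\delta>0$ such that $\max_i|\int f_i\dd\mu-\int f_i\dd\nu|<\delta$ implies $\rho(\mu,\nu)<\eps$, and uniform convergence of $\int f_i\dd\m(\cdot,n)$ for these $f_i$ then yields $\sup_x\rho(\m(x,n),\mu(x))\to0$. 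Combined with uniform continuity of $\mu$ and the triangle inequality, this gives equicontinuity of $\m(\cdot,n)$ for all large $n$, the finitely many small $n$ being handled by continuity of each $\Fbar_n$; this is \ref{Fbar-equi-1A}. Finally \Implies{Fbar-equi-1A}{Fbar-equi-1} is immediate from Corollary \ref{cor:En-Wasser}: uniform $\rho$-equicontinuity of $\m(\cdot,n)$ gives, uniformly in $n$, small $\Fbar_n(x,y)=W_1(\m(x,n),\m(y,n))$ when $d(x,y)$ is small, and $\Fbar=\limsup_n\Fbar_n$. This closes all equivalences, and the formula $f^*(x)=\int_X f\dd\mu(x)$ was already identified in the dictionary step.
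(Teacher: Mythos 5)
Your proof is correct, but its centerpiece is genuinely different from the paper's. The paper pivots on \Implies{Fbar-equi-1B}{Fbar-equi-1A}: a local equicontinuity claim for the maps $\m(\cdot,n)$ proved by contradiction --- ergodic decomposition of the limit $\xi$ of $\m(y_k,n_k)$, selection of a bad ergodic component $\nu$ with $\supp(\nu)\subset\supp(\xi)$, and the portmanteau theorem to produce iterates $T^{j_k}(y_k)$ converging to a point of $\supp(\nu)$ --- followed by a Lebesgue-number patching argument to pass from local to uniform; condition \ref{Fbar-equi-7} is then deduced from \ref{Fbar-equi-1A} by following the equicontinuity-plus-pointwise-convergence argument of \cite[Lemma 3.3]{LTY15}. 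You instead make \Implies{Fbar-equi-4}{Fbar-equi-7} the hard step and prove it directly: your observation that $G\circ\mu$ is continuous and constant on orbits, so that $\int G\circ\mu \dd\m(x_j,n_j)=G(\mu(x_j))$ and hence the push-forward $\mu_*\nu$ is the point mass at $\mu(x_0)$, replaces the paper's portmanteau/support mechanism and is arguably cleaner (in fact you could even skip the ergodic decomposition at the end: dominated convergence plus $T$-invariance of $\nu$ give $\int f\dd\nu=\int\bigl(\int f\dd\mu(z)\bigr)\dd\nu(z)=\int f\dd\mu(x_0)$, so $\nu=\mu(x_0)$ at once). Your other deviations are minor but sound: you close the soft cycle with \Implies{Fbar-equi-4}{Fbar-equi-1} via Corollary \ref{cor:erg} and uniform continuity of $(x,y)\mapsto W_1(\mu(x),\mu(y))$ on the compact space $X\times X$, where the paper invokes Corollary \ref{thm1}; and you obtain \ref{Fbar-equi-1A} downstream of \ref{Fbar-equi-7}, via $\sup_{x\in X}\rho(\m(x,n),\mu(x))\to 0$ (your finitely-many-test-functions reduction is legitimate because all compatible metrics on the compact space $\M(X)$ are uniformly equivalent), whereas the paper establishes \ref{Fbar-equi-1A} upstream of \ref{Fbar-equi-7}. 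What your route buys is self-containedness --- no appeal to an external pointwise-to-uniform lemma --- and a shorter contradiction argument; what the paper's route buys is having the quantitative condition \ref{Fbar-equi-1A} in hand first, from which both \ref{Fbar-equi-2} and \ref{Fbar-equi-7} fall out. Both implication graphs are strongly connected over all nine conditions, and both identify $f^*(x)=\int_X f\dd\mu(x)$ in the same dictionary step \IFF{Fbar-equi-4}{Fbar-equi-5}.
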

\begin{proof}
\Implies{Fbar-equi-1}{Fbar-equi-1B}: By definition of $\Fbar$-continuity the identity map mapping $(X,d)$ to $(X,\Fbar)$ is continuous. With this observation, the conclusion follows from Theorem  \ref{theorem:1}.

\Implies{Fbar-equi-1B}{Fbar-equi-1A}: By Lemma \ref{lem:unique-erg} every $x\in X$ is a generic point so the map $x\mapsto\mu(x)$ is well-defined and continuous (see Remark \ref{rem:H-cont}).

First, we claim that for every $x\in X$ and $\eps>0$ there exist $N=N(x)\in\mathbb{N}$ and $\delta=\delta(x)>0$ such that if $y\in X$ and $d(x,y)<\delta$, then $\rho(\m(x,n),\m(y,n))<\eps$ for every $n\ge N$.

Before proving the claim, we will show that it is all we need to finish the proof. To this end, we fix $\eps>0$ and use the claim for every $x\in X$ to find $\delta(x)>0$ and $N(x)\in\mathbb{N}$, so that for every $y\in X$ with $d(x,y)<\delta(x)$  we have
\begin{equation} \label{ngeNx}
\rho(\m(x,n),\m(y,n))<\eps/2\quad\text{for all $n\ge N(x)$}.
\end{equation}
Next, we cover the space $X$ by open balls $B(x,\delta(x))$ and choose a finite subcover $\mathcal{U}$. Denote the centres of the balls in $\mathcal{U}$ by $x_1,\ldots,x_k$. Let $N=\max\{N(x_1),\ldots,N(x_k)\}$. Let $\delta_0>0$ be the Lebesgue number for $\mathcal{U}$. We use continuity of the maps $\m(\cdot,1),\ldots,\m(\cdot,N)$ to find $\delta_1>0$ so that if $x,y\in X$ satisfy $d(x,y)<\delta_1$, then
\begin{equation}\label{1N-bound}
\rho(\m(x,n),\m(y,n))<\eps\quad\text{for }1\le n\le N.
\end{equation}
Let $\delta=\min\{\delta_0,\delta_1\}$. To see that our choice of $\delta$ is right, we pick $x,y\in X$ with $d(x,y)<\delta$. Note that this choice of $\delta$ implies that \eqref{1N-bound} holds. Since $\{x,y\}$ has the diameter smaller than the Lebesgue number for the cover $\mathcal{U}$ there is $1\le j \le k$ such that $d(x,x_j)<\delta(x_j)$ and $d(y,x_j)<\delta(x_j)$. Using that $N(x_j)\le N$ and the triangle inequality it now easily follows from \eqref{ngeNx} that
\begin{equation}\label{Nlen-bound}
\rho(\m(x,n),\m(y,n))<\eps\quad\text{for }N\le n.
\end{equation}
Combining \eqref{1N-bound} and \eqref{Nlen-bound} we see that 
the condition \ref{Fbar-equi-1A} of Theorem \ref{thm-Fbar-continuous} holds as required.

It remains to prove the claim holds. Aiming for a contradiction, suppose that there are $x\in X$ and $\eps_0>0$ such that for every $k,N\in\mathbb{N}$ we can find $y_k\in X$ and $n_k\ge N$  satisfying $d(x,y_k)<1/k$ and
\begin{equation}\label{eps0}
    \rho(\m(x,n_k),\m(y_k,n_k))\ge\eps_0.
\end{equation}
Since given $k\in\mathbb{N}$ our choice of $N\in\mathbb{N}$ is arbitrary, we can use the fact that $x$ is generic for $\mu(x)$ and pick $N=N(k)$ such that
\begin{equation}\label{distmxnkmux}
    \rho(\m(x,n),\mu(x))\le 1/k\quad\text{for all }n\ge N=N(k).
\end{equation}
In particular, \eqref{distmxnkmux} holds with $n_k$ substituted for $n$.
Note that the way we have chosen $y_k$ implies that $y_k\to x$ with $k\to\infty$. In addition,
passing to a subsequence, if necessary, we assume that the sequence
$(\m(y_k,n_k))_{k=1}^{\infty}$ converges to $\xi$ when $k\to\infty$. We necessarily have that $\xi$ is an invariant measure, but it is not necessarily true that $\xi$ is ergodic. By the ergodic decomposition theorem, there is a probability measure $\lambda$ supported on the set $\MTe(X)$ of ergodic $T$-invariant measures such that
\begin{equation}
    \label{ergd}
\xi=\int_{\MTe(X)}\nu \dd\lambda(\nu).
\end{equation}
Let $\text{supp}(\xi)$ be the support of $\xi$, that is, the smallest closed set with the full  measure with respect to $\xi$. By \eqref{ergd} we get that $\nu(\text{supp}(\xi))=1$ for $\lambda$-almost every $\nu\in \MTe(X)$. By \eqref{eps0}, we also have $\rho(\nu,\mu(x))>0$ for $\nu$ in a subset of $\MTe(X)$ with positive measure with respect to $\lambda$.

We conclude that there must be an ergodic measure $\nu$ such that $\text{supp}(\nu)\subset\text{supp}(\xi)$ and $\rho(\nu,\mu(x))>0$.
Since points generic for $\nu$ lie densely in
$\text{supp}(\nu)$ and $z\mapsto \mu(z)$ is assumed to be continuous, any point $z\in \text{supp}(\nu)$ satisfies $\mu(z)=\nu$ and $\xi(U)>0$ for every open  neighbourhood $U$ of $z$.  By the portmanteau theorem, for each open neighbourhood $U$ of $z$ we have
\[
\liminf_{k\to\infty}\m(y_k,m_k)(U)=\frac{1}{n_k}|\{0\le j < n_k: T^j(y_k)\in U\}|\ge \xi(U)>0.
\]
It follows that for some sequence $(j_k)_{k=1}^\infty$ we have $T^{j_k}(y_k)\to z$ when $k\to\infty$.
Continuity of the map $\mu(\cdot)$ implies that
\[
\lim_{k\to\infty}\mu(T^{j_k}(y_k))=\mu(\lim_{k\to\infty}T^{j_k}(y_k))=\mu(z)=\nu.
\]
Now, since for each $k\in\mathbb{N}$ we have $\mu(T^{j_k}(y_k))=\mu(y_k)$, we conclude that $\mu(x)=\mu(z)=\nu$ contradicting  $\rho(\nu,\mu(x))>0$.

\Implies{Fbar-equi-1A}{Fbar-equi-2}: Since $\V(x)$ is the set of limit points of the sequence $(\m(x,n))_{n=1}^\infty$ the continuity of the map $x\mapsto \V(x)$ follows immediately from \ref{Fbar-equi-1A}. To finish the proof it is enough to apply Lemma \ref{lem:unique-erg}.


\Implies{Fbar-equi-2}{Fbar-equi-3}: By Remark \ref{rem:H-cont}, continuity of the map $x\mapsto \mu(x)$ follows from \ref{Fbar-equi-2}. The rest of the proof is clear.

\Implies{Fbar-equi-3}{Fbar-equi-4}: This is clear.

\Implies{Fbar-equi-4}{Fbar-equi-1}:
This follows from 
Corollary \ref{thm1}.


\Implies{Fbar-equi-1A}{Fbar-equi-7}:
Note that
for every $x\in X$ and $n\in\mathbb{N}$, the following holds
\[\frac{1}{n}\sum_{k=0}^{n-1}f\circ T^k(x)=\int_X f \dd\m(x,n).\]
Therefore uniform equicontinuity of the family of maps $(m(\cdot,n))_{n=1}^\infty$
 implies that for every continuous function $f\colon X\to \mathbb{R}$,
    the family of continuous functions  \[\left\{\frac{1}{n}\sum_{k=0}^{n-1}f\circ T^k: n\in\mathbb{N}\right\}\] is also uniformly equicontinuous. To end the proof it suffices to follow the proof of (2)$\Rightarrow$(3) in \cite[Lemma 3.3]{LTY15} almost verbatim (the same reasoning is implicit in \cite{Ox52}).

\Implies{Fbar-equi-7}{Fbar-equi-6}: Fix a continuous function $f\colon X\to \mathbb{R}$.
    Assume that the sequence $\{\frac{1}{n}\sum_{k=0}^{n-1}f\circ T^k\}$ converges uniformly to $f^*\colon X\to \mathbb{R}$. Note that $f^*=f^*\circ T^m$ for every $m\in\mathbb{N}$. Fix $\eps>0$ and use uniform convergence to find $L\in\mathbb{N}$ such that for every $\ell\ge L$ and $x\in X$ we have
    \[
    \left|\frac{1}{\ell}\sum_{k=0}^{\ell-1}f\circ T^k(x)-f^*(x)\right|<\eps.
    \]
It follows that if $n-m\ge L$ and $x\in X$, then
\[    \left|\frac{1}{n-m}\sum_{k=m}^{n-1}f\circ T^k(x)-f^*(x)\right|
    =\left|\frac{1}{n-m}\sum_{k=0}^{n-m-1}f\circ T^k(T^m(x))-f^*(T^m(x))\right|<\eps
\]
and \ref{Fbar-equi-6} is proved.

\Implies{Fbar-equi-6}{Fbar-equi-5}: This is clear.


\Implies{Fbar-equi-5}{Fbar-equi-4}: It is a consequence of the fact that $\mu_n\to \mu$ as $n\to\infty$ in $\M(X)$ if and only if
$\int f \dd\mu_n\to \int f \dd\mu$ as $n\to\infty$ for every continuous function $f\colon X\to\mathbb{R}$.
\end{proof}

\begin{rem}
Note that the equivalence  \IFF{Fbar-equi-1}{Fbar-equi-5} in Theorem \ref{thm-Fbar-continuous} was also proved in \cite[Theorem 1.3]{ZZ20}.
The proof here is new.
\end{rem}
\begin{rem}
One may wonder if the condition saying that the map $x\mapsto\mu(x)$ is continuous may be omitted from Theorem \ref{thm-Fbar-continuous}\ref{Fbar-equi-3}. Dowker and Lederer \cite{DL}  seems to be the first to investigate the question of what happens when all points of a topological dynamical system are generic for some invariant measure. It turns out
that this property together with minimality implies unique ergodicity \cite{DL}.
Katznelson and Weiss  \cite{KW} proved that
if all points are generic and there is only one minimal set, but  more than one ergodic measure, then there must be uncountably
many ergodic measures, and showed that this
possibility can actually arise. In the Katznelson--Weiss example there exists exactly one fully supported ergodic invariant measure and every point is generic for some ergodic invariant measure, which means that the map $x\mapsto\mu(x)$ cannot be continuous. It follows that weakening of Theorem \ref{thm-Fbar-continuous}\ref{Fbar-equi-3} is not possible. Another example of this type is constructed in \cite{FKKL}.
\end{rem}
\begin{rem}\label{rem:rotations}
Note that Theorem \ref{thm-Fbar-continuous} provides an almost purely topological proof that a group rotation is uniquely ergodic if and only if it has a dense orbit. This is because if $X$ is a compact topological group then there exists a left-invariant metric $d$ on $X$, that is a metric such that for every $g\in X$ the rotation by $g$, that is the map $x\mapsto gx$ denoted $R_g$ is an isometry ($d(x,y)=d(R_g(x),R_g(y))$ for all $x,y\in X$). It is now easy to see that every isometry must be $\Fbar$-continuous, hence it is uniquely ergodic if and only if it has a dense orbit.  We also see that for an isometry every point is a generic point for an ergodic measure and has a uniquely ergodic orbit closure, which must be a minimal subset, and all isometries must have a closed set of ergodic measures.
\end{rem}

The concept of mean equicontinuity was introduced in \cite{LTY15}. Mean equicontinuity is equivalent to the mean-L-stable property introduced in \cite{F51}.
It is clear that if  $(X,T)$ is mean equicontinuous
then it is also $\Fbar$-continuous.

We have the following characterisation of mean equicontinuity. Note that according to Theorem \ref{thm-Fbar-continuous} we can replace the condition \ref{mean-2} of  Theorem~\ref{thm:mean-eq} by any condition listed in Theorem \ref{thm-Fbar-continuous}. For the reasons explained in Remark \ref{rem:FGL} below we mention only one such possibility.
\begin{thm}\label{thm:mean-eq}
For a topological dynamical system $(X,T)$ the following conditions are equivalent:
\begin{enumerate}[label=(\arabic*),ref=(\arabic*)]
    \item \label{mean-1} $(X,T)$ is mean equicontinuous;
    \item \label{mean-2} $(X\times X,T\times T)$ is $\Fbar$-continuous;
    \item \label{mean-2A}
    for every $(x,y)\in X\times X$ the system $\overline{\orb((x,y),T\times T)}$ is uniquely ergodic and the map $(x,y)\mapsto\mu(x,y)$ is continuous;
    \item \label{mean-3} $(X,T)$ is Weyl mean equicontinuous: for every $\eps>0$
there exists a $\delta>0$ such that for every $x,y\in X$
with $d(x,y)<\delta$ one has
\[
 \limsup_{n-m\to\infty} \frac{1}{n-m}\sum_{k=m}^{n-1}d(T^k(x),T^k(y))<\eps.
\]
\end{enumerate}
\end{thm}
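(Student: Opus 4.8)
The plan is to run the cycle $(1)\Rightarrow(2)\Rightarrow(3)\Rightarrow(1)$ and to read off $(2)\Leftrightarrow(2A)$ for free, pushing essentially all of the work onto Theorem~\ref{thm-Fbar-continuous} applied to the product system $(X\times X,T\times T)$. Throughout I would equip $X\times X$ with the sum metric $\hat d\bigl((a,b),(c,e)\bigr)=d(a,c)+d(b,e)$; since $\Fbar$ does not depend on the choice of a compatible metric (the remark following Lemma~\ref{lemma:1}), this costs nothing, and it has the convenient feature that the distance from an off-diagonal point $(x,y)$ to the diagonal point $(x,x)$ is exactly $\hat d\bigl((x,y),(x,x)\bigr)=d(x,y)$.

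For \Implies{mean-1}{mean-2} I would simply compare $\Fbar$ on the product with the Besicovitch average along the identity permutation. Taking $\sigma=\mathrm{id}$ in the definition of $\Fbar$ for $(X\times X,T\times T)$ and using subadditivity of $\limsup$ gives
\[
\Fbar_{X\times X}\bigl((x_1,y_1),(x_2,y_2)\bigr)\le D_B(x_1,x_2)+D_B(y_1,y_2).
\]
Mean equicontinuity of $(X,T)$ then controls each summand: given $\eps>0$ choose $\delta>0$ so that $d(\cdot,\cdot)<\delta$ forces $D_B(\cdot,\cdot)<\eps/2$, and conclude $\Fbar_{X\times X}<\eps$ whenever $\hat d\bigl((x_1,y_1),(x_2,y_2)\bigr)<\delta$. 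The equivalence \IFF{mean-2}{mean-2A} is nothing but the equivalence of conditions \ref{Fbar-equi-1} and \ref{Fbar-equi-2} of Theorem~\ref{thm-Fbar-continuous}, read for the product system.

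The implication carrying the real content is \Implies{mean-2}{mean-3}, and here I would invoke condition \ref{Fbar-equi-6} of Theorem~\ref{thm-Fbar-continuous} for $(X\times X,T\times T)$ with the observable $g(u,v)=d(u,v)$, which is continuous on $X\times X$. This yields a continuous $g^*\colon X\times X\to\bbr$ with
\[
g^*(x,y)=\lim_{n-m\to\infty}\frac{1}{n-m}\sum_{k=m}^{n-1}d\bigl(T^k(x),T^k(y)\bigr)
\]
for every $(x,y)$; in particular the Weyl limit exists, so the $\limsup$ appearing in \ref{mean-3} equals $g^*(x,y)$. Since $g$ vanishes on the diagonal, so does $g^*$, and $g^*$ is uniformly continuous on the compact space $X\times X$. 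Hence for every $\eps>0$ there is $\delta>0$ with $|g^*(x,y)|=|g^*(x,y)-g^*(x,x)|<\eps$ whenever $\hat d\bigl((x,y),(x,x)\bigr)=d(x,y)<\delta$, which is precisely Weyl mean equicontinuity. Finally, \Implies{mean-3}{mean-1} is immediate from $D_B\le D_W$: the interval $[0,n)$ is admissible in the Weyl $\limsup$, so a small $D_W$ forces a small $D_B$.

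The only genuinely delicate step is \Implies{mean-2}{mean-3}, where one must read condition \ref{Fbar-equi-6} of Theorem~\ref{thm-Fbar-continuous} over arbitrarily long intervals (matching the Weyl averaging scheme) and then exploit that the observable $d$ vanishes exactly on the diagonal; every other implication is a one-line reduction. The deeper phenomenon---that the a priori weaker mean equicontinuity already forces the Weyl version---is entirely absorbed by the hard implication \Implies{Fbar-equi-1A}{Fbar-equi-7} inside Theorem~\ref{thm-Fbar-continuous}, so no additional obstacle arises at this level.
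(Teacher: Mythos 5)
Your proof is correct and follows essentially the same route as the paper: the cycle \Implies{mean-1}{mean-2}, \Implies{mean-2}{mean-3}, \Implies{mean-3}{mean-1} with \IFF{mean-2}{mean-2A} read off from Theorem~\ref{thm-Fbar-continuous}, and the key step \Implies{mean-2}{mean-3} obtained by applying condition~\ref{Fbar-equi-6} of Theorem~\ref{thm-Fbar-continuous} to the observable $d$ on the product system. Your only additions are expansions of steps the paper leaves implicit (the identity-permutation bound $\Fbar_{X\times X}\le D_B+D_B$ for \Implies{mean-1}{mean-2}, and the vanishing-on-the-diagonal plus uniform continuity argument concluding \Implies{mean-2}{mean-3}), which are exactly the intended details.
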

\begin{proof}
\Implies{mean-1}{mean-2}:
As $(X\times X,T\times T)$ is also mean equicontinuous,
$(X\times X,T\times T)$ is $\Fbar$-continuous.

\IFF{mean-2}{mean-2A}: See Theorem \ref{thm-Fbar-continuous} \IFF{Fbar-equi-1}{Fbar-equi-2} and Remark \ref{rem:H-cont}.

\Implies{mean-2}{mean-3}:  As the metric $d(\cdot,\cdot)$
is a continuous function on $X\times X$, by
Theorem \ref{thm-Fbar-continuous}\ref{Fbar-equi-6} the function
\[
(x,y)\mapsto
\lim_{n-m\to\infty} \frac{1}{n-m}\sum_{k=m}^{n-1}d(T^k(x),T^k(y))
\]
is continuous.
Note that the limit in the above formula exists.

\Implies{mean-3}{mean-1}:
This is clear.
\end{proof}

\begin{rem} \label{rem:FGL} Note the equivalence \IFF{mean-1}{mean-2A} in Theorem \ref{thm:mean-eq} is stated as Theorem 4.2 in \cite{FGL18}. Note that the authors \cite{FGL18} worked in a much greater generality and obtained their result for mean equicontinuous actions of locally compact $\sigma$-compact amenable groups. Our proofs work for all countable Abelian 
groups (see Remark \ref{rem:amenable}).
\end{rem}

\begin{rem}
The property in Theorem \ref{thm:mean-eq}\ref{mean-3}
was called Banach mean equicontinuity in \cite{LTY15}. The authors of
\cite{LTY15} also asked whether Banach mean equicontinuity is equivalent to mean equicontinuity.
It is shown in  \cite{DG16} that the answer is positive for minimal systems and in \cite{QZ20} for general systems.
Our Theorem \ref{thm:mean-eq} provides a new proof of this fact for topological dynamical systems, in particular for  $\mathbb{Z}$-actions. In \cite{FGL18}, the authors examined  Weyl and Besicovitch equicontinuity for actions of locally compact $\sigma$-compact amenable groups, and proved that these notions are equivalent assuming that the action has a fully supported invariant measure or that the acting group is Abelian. Our approach generalises to any countable Abelian group action (see Remark \ref{rem:amenable}) without any further assumptions.
\end{rem}

We wrap up with yet another definition motivated by \cite{HLTXY21}. The authors of \cite{HLTXY21} introduced \emph{equicontinuity in the mean} and showed that it is equivalent to mean equicontinuity for minimal systems and the authors of \cite{QZ20} showed the equivalence for all topological dynamical systems. In analogy to these results, we introduce $\{\Fbar_n\}$-equicontinuity and show that it is actually equivalent to $\Fbar$-continuity.

\begin{defn}
	Let $(X,T)$ be a topological dynamical system. We say that $(X,T)$ is $\{\Fbar_n\}$-equicontinuous if for any $\epsilon>0$ there exists a $\delta>0$, such that for every $x,y\in X$ with
	$d(x,y)<\delta$ we have $\Fbar_n(x,y)<\eps$ for every $n\in\mathbb{N}$.
\end{defn}
It is easy to see that $\{\Fbar_n\}$-equicontinuity implies $\Fbar$-continuity. We will show that the converse also holds.

\begin{thm}\label{thm:fn-equi}
For a topological dynamical system $(X,T)$  the following conditions are equivalent:
  \begin{enumerate}[label=(\arabic*),ref=(\arabic*)]
        \item \label{last-1} $(X,T)$ is $\Fbar$-continuous;
        \item \label{last-2} $(X,T)$ is $\{\Fbar_n\}$-equicontinuous.
    \end{enumerate}
\end{thm}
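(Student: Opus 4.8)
The plan is to establish the non-trivial implication \Implies{last-1}{last-2}, since \Implies{last-2}{last-1} is immediate: by \eqref{eq:mean-metric-En} we have $\Fbar(x,y)=\limsup_{n\to\infty}\Fbar_n(x,y)$, so if $\{\Fbar_n\}$-equicontinuity yields $\Fbar_n(x,y)<\eps$ for every $n$ whenever $d(x,y)<\delta$, then passing to the limsup gives $\Fbar(x,y)\le\eps$ under the same hypothesis, which is $\Fbar$-continuity.

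For \Implies{last-1}{last-2}, the key device is to reroute through the finite-time maps rather than arguing with the limsup directly. By Theorem \ref{thm-Fbar-continuous}, the equivalence \IFF{Fbar-equi-1}{Fbar-equi-1A} tells us that $\Fbar$-continuity is the same as \emph{uniform equicontinuity} of the family of empirical measure maps $(\m(\cdot,n))_{n=1}^\infty$. This is precisely the uniform-in-$n$ control we need: it upgrades $\Fbar$-continuity, which a priori only constrains the asymptotic quantity $\Fbar$, into a statement valid simultaneously at every finite time $n$.

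I would then combine this with the first part of Corollary \ref{fn}. Given $\eps>0$, Corollary \ref{fn}(1) supplies a $\delta_1>0$ such that $\rho(\m(x,n),\m(y,n))<\delta_1$ forces $\Fbar_n(x,y)<\eps$ for every $n\in\mathbb{N}$ and all $x,y\in X$. By the uniform equicontinuity of $(\m(\cdot,n))_{n=1}^\infty$, choose $\delta>0$ so that $d(x,y)<\delta$ implies $\rho(\m(x,n),\m(y,n))<\delta_1$ for every $n$. Chaining these two facts gives $\Fbar_n(x,y)<\eps$ for all $n$ whenever $d(x,y)<\delta$, which is exactly $\{\Fbar_n\}$-equicontinuity.

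I do not expect a genuine obstacle here, as the content lies entirely in assembling results already proved earlier in the paper. The only point meriting care is the preservation of uniformity in $n$: both the constant from Corollary \ref{fn}(1) and the constant from the uniform equicontinuity of $(\m(\cdot,n))_{n=1}^\infty$ are independent of $n$, so the final bound $\Fbar_n(x,y)<\eps$ holds uniformly over all $n$. This uniformity is precisely what separates $\{\Fbar_n\}$-equicontinuity from plain $\Fbar$-continuity, and tracking that it is never lost in the two-step chaining is the whole substance of the argument.
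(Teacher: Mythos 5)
Your proposal is correct and follows essentially the same route as the paper: the paper's proof of \Implies{last-1}{last-2} likewise chains Corollary \ref{fn}(1) (a uniform-in-$n$ constant $\eta$ with $\rho(\m(x,n),\m(y,n))<\eta \Rightarrow \Fbar_n(x,y)<\eps$) with the uniform equicontinuity of $(\m(\cdot,n))_{n=1}^\infty$ from Theorem \ref{thm-Fbar-continuous}\ref{Fbar-equi-1A}, and treats \Implies{last-2}{last-1} as immediate. The only cosmetic difference is that you spell out the easy direction via $\Fbar(x,y)=\limsup_{n\to\infty}\Fbar_n(x,y)$ (yielding $\le\eps$, fixed by starting from $\eps/2$), which the paper leaves as ``clear.''
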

\begin{proof}

\Implies{last-1}{last-2}:
Fix $\eps>0$. Use Corollary \ref{fn} to find $\eta > 0$ such that for every  $n\in\mathbb{N}$ and $x,y\in X$ with $\rho(\m(x,n),\m(y,n))<\eta$ we have $\Fbar_n(x,y)<\eps$. By Theorem \ref{thm-Fbar-continuous}\ref{Fbar-equi-1A} there exists $\delta>0$, such that for every $x,y\in X$ with $d(x,y)<\delta$ we have $\rho(\m(x,n),\m(y,n))<\eta$ for every  $n\in\mathbb{N}$. It is now clear that for every $x,y\in X$, the inequality $d(x,y)<\delta$ implies $\Fbar_n(x,y)<\eps$  for every  $n\in\mathbb{N}$.

\Implies{last-2}{last-1}: It is clear.
\end{proof}

\begin{rem}\label{rem:amenable}
Our approach works verbatim for actions of countable Abelian groups.
\end{rem}

\begin{rem}[Added in proof] When the present paper was being completed, two related papers: one authored by
Downarowicz and Weiss \cite{DW21}, the other by Xu and Zheng \cite{XZ21} appeared on the arXiv server. Both papers cotain results  overlapping  non-trivially with our work, although the methods of proof differ. We indicate the similarities here and we have also added some remarks in the main text of our paper. We believe that our new results   (Theorem \ref{theorem:1} and Corollary \ref{cor:fbar-0}, Corollary \ref{fn}, Theorem \ref{thm:fn-equi}, extension of Theorem \ref{thm-Fbar-continuous} by extra equivalent conditions \ref{Fbar-equi-1B} and \ref{Fbar-equi-1A})  are still interesting additions to the theory presented in \cite{DW21, FGL18, XZ21, ZZ20}.

In \cite{DW21}, the authors studied, among the others, the notion we call $\Fbar$-continuity, but they did not consider the pseudo-metric $\Fbar$. Instead, they studied:
\begin{enumerate}\item \emph{continuously pointwise ergodic} topological dynamical systems defined using condition appearing in Theorem \ref{thm-Fbar-continuous}\ref{Fbar-equi-3} (note that this condition was studied earlier in \cite{FGL18} without giving it a name);
\item \emph{uniform} topological dynamical systems defined by the condition appearing in Theorem \ref{thm-Fbar-continuous}\ref{Fbar-equi-7}.
\end{enumerate}
Downarowicz and Weiss also proved in \cite[Theorem 4.9]{DW21} that a topological dynamical system is uniform if and only if it is  continuously pointwise ergodic, which is the same as the equivalence \IFF{Fbar-equi-3}{Fbar-equi-7} in Theorem \ref{thm-Fbar-continuous} above. The proof presented in \cite{DW21} is different than ours.

In \cite{XZ21} the authors examined the pseudo-metric $\Fbar$ for countable amenable group actions. They repeated the main results of \cite{ZZ20} in that more general setting and recovered the characterisation of mean equicontinuity presented in Theorem \ref{thm:mean-eq}. Recall that this characterisation was  obtained earlier in \cite{FGL18} in an even more general setting (without the assumption that the acting group is countable). The authors of \cite{XZ21} have also noted the equality \eqref{eq:En=Wasser} in a more general setting of finite orbit segments for actions of countable groups.
\end{rem}

\section*{Acknowledgements} The present form of our results in Section 3 benefited greatly from our discussions with Tomasz Downarowicz. These discussions were propelled by the insightful remarks of  the anonymous reviewer, who noted that some assumptions we had made in the earlier version could be relaxed. We would like to thank both, Tomasz Downarowicz and the anonymous referee,  for their help. Although our old results in Section 3 were less complete than their present form, they were still sufficient to carry on all the proofs in Section 4.  Furthermore, we would like to thank the anonymous referee for careful reading and other helpful suggestions, that helped us to improve our paper.
J. Li  was partially supported by NNSF of China (12171298) and NSF of Guangdong Province (2018B030306024). D.~Kwietniak was supported by the National Science Centre, Poland, grant no. 2018/29/B/ST1/01340. The research of H.~Pourmand leading to the present publication has received funding from the Norwegian Financial
Mechanism 2014-2021 via the National Science Centre, POLS grant no.
2020/37/K/ST1/02770.
We are grateful to Leiye Xu and Liqi Zheng for sharing their work \cite{XZ21} with us.

\end{document}